\definecolor{refkey}{gray}{.75}
\definecolor{labelkey}{gray}{.50}
\newtheorem{theo}{Theorem}[section]
\newtheorem{prop}[theo]{Proposition}
\newtheorem{Lem}[theo]{Lemma}
\newtheorem{remark}[theo]{Remark}
\numberwithin{equation}{section}
\def\<{\langle}
\def \>{\rangle}
\def\intg{\int_{\mathbb{R}^{d}}}
\def\E{\textbf{E}}
\def\gd{\nabla}
\def\R{\mathbb{R}}
\let\phi=\varphi
\let\epsilon=\varepsilon
\begin{document}

\title{Convergence to equilibrium for 
Fokker-Planck equation with a general force 
field}
\author{Mamadou Ndao \\
Laboratoire de Math\'ematiques \\ de Versailles,\\
UVSQ, CNRS, \\
universit\'e Paris Saclay,\\
78035 Versailles, France}
%(UMR 8100)\\
%Universit\'e de Versailles Saint-Quentin\\
%45 avenue des Etats Unis\\
%78035 Versailles cedex; France

\maketitle
\vspace{0.3 cm}
\begin{abstract}
The aim of this paper is to study the 
convergence of the solution of the 
Fokker-Planck
equation to the associated
 stationary state when  time 
 goes to infinity. The force field which we
consider here is of a general structure, that
is it may not derive from a potential. The 
proof is based on an adequate splitting
$L=B+A$ of the Fokker-Planck operator $L$ and 
the use of  Krein-Rutmann theory. 
\end{abstract}

\textsc{Keywords:}
Fokker-Planck equation, Semigroup, 
$m$-dissipative, linear 
operators, Krein-Rutman theorem, asymptotic 
convergence.

\tableofcontents

%%%%%%%%%%%%%%%%%%%%%%%%%%%%%%%%
\section{Introduction and main results}
In this paper we study the Fokker-Planck 
equation 
\begin{equation}\label{eq:1.1}
 \Big \{ \begin{array}{l}
 \partial_{t}f=\Delta{f}+{\rm div}(\mathbf{E}f)
 \\
 f(0, x)=f_{0}.
 \end{array}  
\end{equation}
We assume that 
$\left(1+|x|^{2}\right)^{k/p}f_{0} \in 
L^{p}(\mathbb{R}^{d})$ for some $k\geq 0$ and 
$\textbf{E}:\mathbb{R}^{d}\mapsto 
\mathbb{R}^{d}$.
Unlike previous works, we consider a 
general vector field $\textbf{E}$, which does 
not derive necessarily
from a potential. 
%And under some
%suitable assumptions on the vector field we 
%can obtain similar results as in M.P Gualdani, 
%S. Mischler and C. Mouhot's \cite{GMM}
%. We aim to establish 
%that $f(t) \rightarrow 
%M(f_{0}) G$, 
%when $t$ goes to infinity with an exponential 
%rate.
Under suitable assumptions on the vector field we establish that $f(t)\rightarrow M(f_{0})G$ with exponential rate as time $t$ goes to infinity. 
The function $G$ stands for the positive
stationary solution of \eqref{eq:1.1} with total mass equal to $1$, and  the mass of a function is defined throughout this paper by 
$$M(g):=
\intg g(x)dx, \qquad \mbox{for all integrable function} \quad g$$ 
We thus generalise similar results obtained by M.P. Gualdani, S. Mischler and C.
Mouhot in \cite{GMM}, where the 
force field $\textbf{E}$ is 
assumed to be of the form 
$\textbf{E}=\nabla \varphi + U$, with  
$\varphi:
\mathbb{R}^{d}\mapsto \mathbb{R}$, and  
$U \in L^{1}_{\rm loc}(\R^{d},\R^{d})$ is 
such that, ${\rm div}(Ue^{-\varphi})=0$.
In that case, it is noticed that $\mu:=e^{-\varphi}$ is a 
stationary 
solution, and a 
Poincar\'e inequality 
$$\int_{\mathbb{R}^{d}}|u-M(u)|^{2}e^{-\varphi}dx
\leq C \int_{\mathbb{R}^{d}}|\nabla u|^{2}
e^{-\varphi}dx$$ 
holds. For instance one may choose a function 
$\varphi$ of the form 
$$\varphi(x)=\frac{1}{\gamma}|x|^{\gamma} 
+ c_{0},$$
where $\gamma\geq 1$ and $c_{0}\in \mathbb{R}$
is such that 
$$\int_{\mathbb{R}^{d}}e^{-\varphi(x)}dx=1,$$
that is $\mu=e^{-\varphi}$ is a 
probability mesure.
Thus under some appropriate conditions on the 
function $\varphi$ they prove that the solution 
$f$ of \eqref{eq:1.1} converges to $ 
M(f_{0})\;\mu$, when $t$ goes to 
infinity, where we denote 
\begin{equation}\label{eq:1.2}
M(f_{0}):=\int_{\mathbb{R}^{d}}
f_{0}(x)d(x)=\intg f_{0}(x)dx.
\end{equation}
\subsection{Hypotheses}

Throughout this paper, we assume that the 
following assumption (\textbf{H0}) holds:
\begin{itemize}
\item[(\textbf{H0})] The vector field 
$\textbf{E}
\in W^{1, \infty}_{\rm loc}(\mathbb{R}^{d}, 
\mathbb{R}^{d}).$ 
\end{itemize}
\medskip

\noindent Furthermore in the statement of our results we 
may require some of the following assumptions:
\begin{itemize}

\item[(\textbf{H1})]
For some constants
$\alpha,\alpha_{2} >0 \geq 0$, and $\beta,\beta_{2} \in \mathbb{R}$  and exponents 
$$1<\gamma \leq \gamma_{2} \leq 2$$ and for all 
$x\in \mathbb{R}^{d},$ the following holds
\begin{equation}\label{eq:1.3}
\alpha |x|^{\gamma}-\beta \leq x\cdot \E \leq \alpha_{2} |x|^{\gamma_{2}} + \beta_{2}.
\end{equation}

\item[(\textbf{H2})]
There exists $\beta_{0}\in \R$
such that the following holds
\begin{equation}\label{eq:1.4}
-\frac{p}{p'}\;{\rm div}
(\textbf{E}) + k \frac{x}{1+|x|^{2}}
 \cdot\textbf{E} \geq \beta_{0}, 
\end{equation}
for all $p\in [1,\;\infty)$ and $p'$ the conjugated exponent defined by $\frac{1}{p} + \frac{1}{p'}=1.$
\item[(\textbf{H3})]
There exists $\omega^{\star} > 0$ and $R>0$ 
large enough, such that for all $k>0$, 
$p\in [2, +\infty)$, 
and for all $x\in \mathbb{R}^{d}$ satisfying 
$|x|> R$, we have 
\begin{equation}\label{eq:1.5} 
-\frac{kd + k(k+d-2)|x|^{2}}{\left(1+|x|^{2}
\right)^{2}} - \frac{p}{p'}
{\rm div}(\textbf{E})+k\frac{x}{1+|x|^{2}}\cdot
\textbf{E} \geq \omega^{\star}.  
\end{equation}
\end{itemize}

An example of such vector fields is given by
$$\E:=\E_{0}+\E_{1},$$ where $$\E_{0}:=\frac{\gd\<x\>^{\gamma}}{\gamma}\quad \mbox{and }\quad \E_{1}\in \mathcal{C}^{1}(\R^{d}; \R^{d}), \quad \E_{1}\rightarrow 0 \quad \mbox {when}\quad |x|\rightarrow +\infty.$$

\subsection{Main results}

We shall denote by 
$L^{2}_{k}$ the space 
$$L^2_{k} := L^{2}_{k}(\R^{d}):=\big \{ f\in L^{2}(\R^{d})
\; ;\; \intg |f(x)|^{2}\<x\>^{k}dx < \infty 
\big \}$$ and    
the linear operator $(L, D(L))$ by 
\begin{equation}\label{eq:1.6}
Lu:={\rm div}( \nabla u + \textbf{E}u),\quad\mbox{for}\quad u\in 
D(L):=\big \{u\in L^{2}_{k}\; ;\;
Lu \in L^{2}_{k} \big \}.
\end{equation}
We shall see that the linear operator 
$L$ generates a $C_{0}$-semigroup  
on $L^{2}_{k}$. 
Now we are in a position to 
state our first result regarding the stationnary solution of \eqref{eq:1.1}.
\begin{theo}\label{thm:GS}
Let $(L,D(L))$ be defined by \eqref{eq:1.6}. 
Assume that hypotheses 
(\textbf{H1}) and (\textbf{H2}) 
hold with $p=2$ in (\textbf{H2}). Then  the 
equation \eqref{eq:1.1} has 
a stationary solution. More precisely
there exists a 
unique function $G \in L^{2}_{k}$, 
strictly positive, such that 
\begin{equation}\label{eq:1.8}
LG=0 \quad \mbox{and} \quad 
\int_{\mathbb{R}^{d}} G(x) dx =1.
\end{equation}
\end{theo}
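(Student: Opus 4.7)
The plan is to construct $G$ as the (normalized) eigenfunction associated with the principal eigenvalue $0$ of $L$, via a Krein-Rutman argument based on the splitting $L=A+B$ indicated in the abstract.

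\emph{Step 1: Generation and positivity of the semigroup.} Using (\textbf{H0}) and testing $Lu$ against $u\langle x\rangle^{k}$, integration by parts produces weight-induced terms involving precisely the quantity $-\frac{p}{p'}\mathrm{div}(\mathbf{E}) + k\frac{x}{1+|x|^{2}}\cdot\mathbf{E}$ that is bounded below in (\textbf{H2}). With $p=2$ this yields a quasi-dissipative estimate of the form
\begin{equation*}
\langle Lu,u\rangle_{L^{2}_{k}} \leq -\|\nabla u\|_{L^{2}_{k}}^{2} + C\|u\|_{L^{2}_{k}}^{2}.
\end{equation*}
Together with an elliptic range argument (Lax-Milgram for $(\lambda-L)u=f$ via the same quadratic form), this shows that $L$ generates a $C_{0}$-semigroup $S_{L}(t)$ on $L^{2}_{k}$. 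The semigroup is positivity-preserving by the parabolic maximum principle applied to \eqref{eq:1.1}, and it conserves mass because $Lu$ is in divergence form: $\intg Lu\,dx=0$ for sufficiently decaying $u\in D(L)$.

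\emph{Step 2: Splitting and compactness.} Choose a smooth cut-off $\chi_{R}$ equal to $1$ on $B(0,R)$ with compact support, and set $A:=M\chi_{R}$, $B:=L-A$, for $M,R$ large. The confining lower bound $x\cdot\mathbf{E}\geq \alpha|x|^{\gamma}-\beta$ of (\textbf{H1}) allows to upgrade the previous estimate into genuine dissipativity of $B+\omega^{\star}$ on $L^{2}_{k}$ for some $\omega^{\star}>0$: away from the support of $\chi_{R}$, the confinement produces a positive weighted contribution $\sim\alpha|x|^{\gamma}$ that absorbs $C$, provided $R,M$ are large enough. The operator $A$ is a bounded multiplier with compactly supported symbol, so interior elliptic regularity applied to the resolvent combined with Rellich-Kondrachov gives that $A(\lambda-B)^{-1}$ is compact on $L^{2}_{k}$.

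\emph{Step 3: Krein-Rutman.} The splitting, the spectral gap for $B$, and the compactness of $A(\lambda-B)^{-1}$ fit into the framework of the quantitative Krein-Rutman theorem of \cite{GMM}. The spectrum of $L$ in the half-plane $\{\mathrm{Re}\,\lambda>-\omega^{\star}\}$ consists of isolated eigenvalues of finite multiplicity, and mass conservation forces $0\in\sigma(L)$. Positivity of $S_{L}(t)$ together with Krein-Rutman then yields a strictly positive eigenfunction $G\in L^{2}_{k}$ with $LG=0$, simple and unique up to scalar multiplication (strict positivity is confirmed by the strong parabolic maximum principle). Normalizing $\intg G\,dx=1$ yields \eqref{eq:1.8}.

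The main obstacle will be Step 2: verifying that $A(\lambda-B)^{-1}$ is genuinely compact on the \emph{weighted} space $L^{2}_{k}$. The regularization coming from $(\lambda-L)^{-1}$ is only interior, so compactness relies crucially on $A$ having compact support, and one must check that the weight $\langle x\rangle^{k}$ does not spoil the application of Rellich-Kondrachov on the support of $\chi_{R}$. One also has to calibrate the constants $M$ and $R$ simultaneously so that the dissipativity of $B+\omega^{\star}$ is preserved for some $\omega^{\star}>0$. Once these quantitative checks are in place, the existence, positivity and uniqueness of $G$ follow from the standard Krein-Rutman machinery.
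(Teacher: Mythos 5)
Your overall architecture (generation of a positive, mass-conserving semigroup; a splitting of $L$ into a bounded multiplier with compact support plus a dissipative remainder; a Krein--Rutman argument) matches the paper's, but the engine you invoke is different and, as written, it does not run under the stated hypotheses. You rely on a \emph{compactness} route: a spectral gap for the dissipative part $B+\omega^{\star}$ with $\omega^{\star}>0$, compactness of $A(\lambda-B)^{-1}$ via Rellich--Kondrachov, and then the quantitative Krein--Rutman framework of \cite{GMM}. The critical step is your claim that the confining bound $x\cdot\E\geq \alpha|x|^{\gamma}-\beta$ of (\textbf{H1}) ``produces a positive weighted contribution $\sim\alpha|x|^{\gamma}$ that absorbs $C$'' away from the support of the cut-off. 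In the $L^{2}_{k}$ energy identity (the paper's \eqref{eq:2.2}--\eqref{eq:2.3}) the drift enters only through the combination $-\mathrm{div}(\E)+k\frac{x\cdot\E}{1+|x|^{2}}$; the confinement contributes $k\alpha|x|^{\gamma}/(1+|x|^{2})$, which tends to $0$ at infinity for $\gamma<2$, and $\mathrm{div}(\E)$ is not controlled separately by (\textbf{H1}). Hypotheses (\textbf{H1})--(\textbf{H2}) with $p=2$ only give the lower bound $\Psi_{k}/\<x\>^{k}\geq \beta_{0}-O(1)$ with $\beta_{0}\in\R$ of arbitrary sign. A strictly positive $\omega^{\star}$ outside a large ball is exactly the content of the \emph{additional} hypothesis (\textbf{H3}), which Theorem \ref{thm:GS} does not assume (it is reserved for Theorem \ref{thm:Cv-GS}). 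Without it, the essential spectrum of $L$ is only localized in a half-plane $\{\mathrm{Re}\,z\leq\lambda_{0}\}$ with possibly $\lambda_{0}>0$, so Weyl's theorem does not make $0$ an isolated eigenvalue of finite multiplicity and the classical Krein--Rutman step collapses.

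The paper avoids precisely this obstruction by using the non-compact Krein--Rutman theorem of Mischler--Scher (Theorem \ref{thm:SM-JC}), whose splitting condition only asks that $A-\tau I$ be $m$-dissipative for \emph{some} $\tau\in\R$, not a negative one. The price is three additional structural inputs that your proposal does not supply: (i) a positive supersolution of the dual problem, $\psi=\<x\>^{-\alpha_{0}}$ with $L^{\ast}\psi\geq b\psi$, which is where (\textbf{H1}) is actually used (Proposition \ref{thm:adj}); (ii) Kato's inequality $L|f|\geq \mathrm{sgn}(f)Lf$ to establish that $S_{L}$ preserves positivity; and (iii) the Hopf strong maximum principle to get strict positivity and, with it, simplicity of the principal eigenvalue. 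If you want to keep your compactness-based route you must either add (\textbf{H3}) to the hypotheses, or replace Step 2 by these three ingredients and cite the non-compact version of the theorem. Your Step 1 and the identification of the principal eigenvalue as $0$ via mass conservation (which also deserves a word on why $\lambda\int G=\int LG=0$ is legitimate, i.e.\ why $G\in L^{1}$) are otherwise consistent with the paper.
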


The following is our main result regarding the evolution equation.
\begin{theo}\label{thm:Cv-GS}
Assume that hypotheses
(\textbf{H1}), (\textbf{H2}) and  
(\textbf{H3}) hold and  let 
$f_{0}\in L^{2}_{k}$ the initial datum of 
\eqref{eq:1.1} be given. Then there 
exists a unique function $f\in C\left([0,\infty)\; ;\; L^{2}_{k}\right)$ 
solution to 
\eqref{eq:1.1}. Moreover, there exist a real number
$\omega >0$ and a constant $C>0$ such that  
\begin{equation}\label{eq:1.9}
\forall\, t\geq 0 \qquad
\|f(t)- M(f_{0})G
\|_{L^{2}_{k}}\leq
C\exp \left(-\omega t\right)
\|f_{0}- M(f_{0})G
\|_{L^{2}_{k}}. 
\end{equation}
\end{theo}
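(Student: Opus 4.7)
The plan is to combine the existence of the stationary state $G$ from Theorem~\ref{thm:GS} with an adequate splitting $L=B+A$, then invoke Krein--Rutman theory to obtain a spectral gap of $L$ in $L^{2}_{k}$, following the strategy of~\cite{GMM}. Existence and uniqueness of $f\in C([0,\infty);L^{2}_{k})$ will come from the fact (already announced after~\eqref{eq:1.6}) that $L$ generates a $C_{0}$-semigroup $S_{L}$ on $L^{2}_{k}$. Since $L$ is in divergence form, integrating $\partial_{t}f=Lf$ against $1$ yields mass conservation $M(f(t))\equiv M(f_{0})$. Writing $g(t):=f(t)-M(f_{0})G$, the function $g$ solves $\partial_{t}g=Lg$ with $M(g(t))\equiv 0$, so \eqref{eq:1.9} is equivalent to proving
\[
\|S_{L}(t)g_{0}\|_{L^{2}_{k}}\leq C\,e^{-\omega t}\,\|g_{0}\|_{L^{2}_{k}},\qquad g_{0}\in \mathcal{X}_{0}:=\{h\in L^{2}_{k}\,:\,M(h)=0\}.
\]

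To produce the spectral gap, I introduce the splitting $L=B+A$ with $Au:=M_{0}\,\chi_{R}\,u$ and $Bu:=Lu-Au$, where $\chi_{R}$ is a smooth cutoff supported in $\{|x|\leq 2R\}$ and equal to $1$ on $\{|x|\leq R\}$, and $M_{0}>0$ is a large constant. A standard integration-by-parts computation of $\intg(Bu)\,u\,\<x\>^{k}\,dx$ produces, up to lower-order terms, exactly the expression appearing on the left-hand side of~\eqref{eq:1.5}. Applying (\textbf{H3}) for $|x|>R$ with $p=2$, and absorbing the contribution from $\{|x|\leq R\}$ by taking $M_{0}$ large enough, yields a dissipativity estimate
\[
\intg (Bu)\,u\,\<x\>^{k}\,dx \leq -\omega^{\star}\,\|u\|_{L^{2}_{k}}^{2},
\]
so that $B+\omega^{\star}\mathrm{I}$ is $m$-dissipative on $L^{2}_{k}$ and the generated semigroup $S_{B}$ obeys $\|S_{B}(t)\|_{L^{2}_{k}\to L^{2}_{k}}\leq e^{-\omega^{\star} t}$.

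Next, because $A$ is a smooth compactly supported multiplier, $A\,S_{B}(t)$ inherits the parabolic smoothing of $S_{B}$ on a bounded set and hence is compact from $L^{2}_{k}$ into itself for every $t>0$. Iterating the Duhamel identity $S_{L}(t)=S_{B}(t)+\int_{0}^{t}S_{L}(t-s)\,A\,S_{B}(s)\,ds$ a sufficient number of times produces a decomposition $S_{L}(t)=U(t)+V(t)$ with $\|U(t)\|_{L^{2}_{k}\to L^{2}_{k}}\lesssim e^{-\omega^{\star} t}$ and $V(t)$ compact on $L^{2}_{k}$, which is the enlargement-of-functional-spaces argument of~\cite{GMM} adapted to the non-gradient vector field considered here. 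The positivity of $S_{L}$ (by the parabolic maximum principle for the Fokker--Planck operator), its conservativity, and the compactness of $V(t)$ allow one to apply the semigroup version of the Krein--Rutman theorem: $0$ is a simple dominant eigenvalue of $L$ with strictly positive eigenvector $G$ (provided by Theorem~\ref{thm:GS}) and spectral projector $\Pi h=M(h)\,G$, while the remainder of the spectrum satisfies $\Re\lambda\leq -\omega$ for some $\omega\in(0,\omega^{\star}]$. Restricting to $\mathcal{X}_{0}=(\mathrm{I}-\Pi)L^{2}_{k}$ yields exactly~\eqref{eq:1.9}.

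The main obstacle is the compactness assertion on $V(t)$: one must show that iterated convolutions of $A\,S_{B}$ are compact on the \emph{weighted} space $L^{2}_{k}$, which requires combining parabolic regularity inside the support of $\chi_R$ with sufficient decay at infinity provided by the weight $\<x\>^{k}$. The three hypotheses play complementary roles here: (\textbf{H1}) is the confining bound on $\E$ needed to guarantee $G\in L^{2}_{k}$ and to propagate moments, (\textbf{H2}) controls the weighted norms through $S_{B}$ via the dissipation of $-(p/p')\div\E$, and (\textbf{H3}) delivers the pointwise dissipation rate $\omega^{\star}$ outside the ball of radius $R$. Once these estimates and the compactness are in hand, the Krein--Rutman conclusion and the transition from spectral gap to exponential decay on $\mathcal{X}_{0}$ follow from the abstract scheme of~\cite{GMM}.
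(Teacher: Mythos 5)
Your proposal follows the same overall architecture as the paper: mass conservation reduces the claim to exponential decay of $S_{L}(t)$ on the mean-zero subspace, and the decay is extracted from the splitting of $L$ into a dissipative part plus a large multiple of a compactly supported cutoff, with (\textbf{H3}) giving the rate $\omega^{\star}$ outside $B_R$ and the cutoff absorbing the bounded region. (Your labels are swapped relative to the paper: what you call $B$ is the paper's $A$, whose semigroup decays like $e^{-\omega^{\star}t}$ by the paper's Proposition~\ref{thm:Contract}, and your compactly supported multiplier $A$ is the paper's bounded operator $B=M\zeta_{n}$; this is immaterial.) Where you genuinely diverge is in how the spectral gap is pulled out of the Duhamel iteration. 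You argue via compactness of the iterated remainder $V(t)$ on $L^{2}_{k}$ and a semigroup Krein--Rutman/essential-spectral-radius argument; the paper instead represents the twice-iterated remainder $(S_{L}\ast BS_{A}\ast BS_{A})(t)$ by an inverse Laplace transform, bounds $\|R_{L}(-a+is)\|$ and $\|BR_{A}(-a+is)\|$ along a vertical line to the left of the imaginary axis, and cites the abstract factorization theorem of Mischler--Scher (\cite[Theorem 2.1]{JM}) to conclude that $\Sigma(L)\setminus\{0\}$ lies in $\{\Re z<-a^{\star}\}$. Both are instances of the same enlargement machinery of \cite{GMM}; your route makes the compactness explicit as the key hypothesis, the paper's route trades it for resolvent bounds and an appeal to the cited theorem.

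The one point you should not leave as an announced ``main obstacle'' is precisely the compactness of $A\,S_{B}(t)$ (in your notation) on the weighted space: for the argument to close you need at least a quantitative gain of regularity or of decay for $\chi_{R}S_{B}(t)$, e.g.\ an $L^{2}_{k}\to H^{1}(B_{2R})$ bound with an integrable singularity at $t=0$, so that the iterated convolution is compact and the essential growth bound of $S_{L}$ drops below $-\omega$. You assert this follows from parabolic smoothing but do not supply the estimate; since everything else in your argument is conditional on it, this is the step that must actually be carried out (the paper delegates the analogous verification to the uniform boundedness of $B(I-AR_{A}(-a+is))$ and to \cite{JM}). Likewise, the simplicity of the eigenvalue $0$ and the identification of the spectral projector with $\Pi h=M(h)G$ should be tied back to Theorem~\ref{thm:GS} together with the strong maximum principle (Lemma~\ref{thm:Str-MP}), rather than stated as part of the Krein--Rutman output.
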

%%%%%%%%%%%%%%%%%%%%%%%%%%%%%%%%%%%%

%%%%%%%%%%%%%%%%%%%%%%%%%%%%%%%%%%%%
\section{Definitions and notations}

We recall in this section some 
definitions and notations. We denote 
$$\langle x\rangle:=(1+|x|^{2})^{1/2}\quad 
\mbox{and}\quad  
\langle x\rangle^{k}:= \left(1+|x|^{2}
\right)^{\frac{k}{2}}.$$
We define the Lebesgue spaces  
$L^{p}_{k}(\mathbb{R}^{d})$ and the Sobolev 
spaces $H^{1}_{k}(\mathbb{R}^{d})$ as 
follows: 
$$L^{p}_{k}
(\mathbb{R}^d):=\Big \{f 
\in L^{p}(\mathbb{R}^d)\; ; \; 
\int_{\mathbb{R}^{d}} |f(x)|^{p}
\langle x \rangle^{k}dx <\infty  \Big\},$$
for all $p,k\in \R$ such that 
$k>0$ and $p\geq 1.$ We endow these spaces
with their natural norms. 
The scalar product of $L^{2}_{k}$ is defined by
$$(f|g)_{k}:=\int_{\mathbb{R}^{d}}
f(x)\overline{g(x)}\langle x\rangle^{k}dx,$$ 
for all $f$ and $g$ $\in L^{2}_{k}$. 
We set 
$$H^{1}_{k}(\mathbb{R}^{d}):=\Big \{f\in 
L^{2}_{k}(\mathbb{R}^{d})\; ; \;\nabla f\in 
(L^{2}_{k}(\mathbb{R}^{d}))^{d}\Big \}.$$
It is clear that the space 
$C^{\infty}_{c}(\R^{d})$ is dense in 
$L^{p}_{k}$ for $p < \infty$, as well as in $H^{1}_{k}.$

We recall that 
$R_{L}(\lambda) :=(\lambda - L)^{-1} $ 
denotes the resolvent operator of $L$
for some given $\lambda$ such that 
the operator $(\lambda - L)$
has a bounded inverse and $S_{L}(t):=\exp(tL)$ denotes 
the semigroup generated by $L$. 

To prove Theorems \ref{thm:GS} and \ref{thm:Cv-GS},   
our approach is based on the decomposition of 
the operator $L$  as follows: 
for an appropriately 
chosen bounded operator $B$, we shall split
$L$ in the form $L= B+A,$ where the operator 
$A$ is so that there is some $\tau_{0}\in \R$
such that for all $\tau >\tau_{0}$, 
the linear operator
$A-\tau I$ is  $m$-dissipative, 
(See J. Scher and S. Mischler \cite{JM}).
With the above assumptions we shall 
describe the 
appropriate splitting for $L$, and thereby for 
$S_{L}(t).$ Indeed
the mild solution of 
$$\frac{df}{dt}-Lf=\frac{df}{dt}-Af - Bf=0,$$
is given by 
$$f(t)=S_{L}(t)f_{0}\; \mbox{and also by }
\; f(t)=S_{A}(t)f_{0} + 
\int_{0}^{t}S_{A}(t-\tau)BS_{L}(\tau)f_{0}
d\tau.$$
Where $S_{A}(t)=\exp(tA).$
This means that we can write the semigroup 
$S_{L}(t)$ split as follows:
\begin{equation}\label{eq:Rel-Conv}
S_{L}(t)= S_{A}(t)+(S_{A}\ast 
BS_{L})(t),
\end{equation}
where the convolution $\ast$ is defined by 
$$(S_{A}\ast (BS_{L}))(t):= \int^{t}_{0}S_{A}
(t-\tau)BS_{L}(\tau)f_{0}d\tau.$$
Analogously we may consider the equation 
$$\frac{dv}{dt}-Av=0\quad 
\mbox{with initial data}\quad 
v(0, x)=v_{0}(x),$$
and since we have $Av:=Lv-Bv,$ we conclude that
$$\frac{dv}{dt}-Lv=-Bv\quad 
\mbox{with}\quad v(0,x)=v_{0}(x).$$
This implies that 
$$S_{A}(t)v_{0}=S_{L}(t)v_{0}-\int^{t}_{0}
S_{L}(t-\tau)BS_{A}(\tau)v_{0}d\tau.$$
and then one sees again that \eqref{eq:Rel-Conv} can also
be written as follows:
\begin{equation}\label{eq:Rel-Conv-bis}
S_{L}(t)=S_{A}(t)+ (S_{L}\ast BS_{A})(t).
\end{equation}
The remainder of this paper is organized as 
follows. In section 3,  we state  
preliminary results
wich are used in  the sequel. In section 
4, existence of a 
the stationary solution will be investigated.
And  in the last section 5,  we explore the 
stability issue for the evolution equation 
\eqref{eq:1.1} and prove our second main result, Theorem \ref{thm:Cv-GS}.
%%%%%%%%%%%%%%%%%%%%%%%%%%%%%%%%

%%%%%%%%%%%%%%%%%%%%%%%%%%%%%%%%
\section{Preliminary results} 

In this section we discuss the existence 
of a solution for the evolution equation 
\eqref{eq:1.1} in some Banach spaces.  To this end 
we study some properties
of the linear operator $(L,D(L)).$
\begin{prop}\label{thm:Exist-SG}
Assume that (\textbf{H2}) holds with $p=2.$
Consider $(L, D(L))$ defined by \eqref{eq:1.6}.  Then there exists $\lambda_{0} := \lambda_{0}(2)\in 
\R$ such that for all $\lambda\geq 
\lambda_{0}$ and for $f\in L^{2}_{k}$ we have 
$$((\lambda-L)f|f)_{k}\geq 
(\lambda- \lambda_{0})\|f\|_{L^{2}_{k}}.$$ 
That is, the linear operator $(L, D(L))$
is the generator of a 
$C_{0}$-semigroup 
of contraction on $L^{2}_{k}.$
Moreover if $f_{0}\in L^{2}_{k}$ 
is the initial data of 
\eqref{eq:1.1}, there exists $f\in C
\left([0, T]; L^{2}_{k}\right)$ solution of 
\eqref{eq:1.1} associated to $f_{0}$ and 
$f(t):=S_{L}(t)f_{0}.$ 
\end{prop}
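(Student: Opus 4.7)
The plan is to establish dissipativity of $L-\lambda_0 I$ in $L^2_k$ by a direct integration-by-parts computation that brings (\textbf{H2}) into play, and then invoke Lumer--Phillips to obtain generation of a $C_0$-semigroup, from which the mild-solution statement follows automatically.

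First I would take $u\in C^\infty_c(\R^d)$ and compute $(Lu,u)_k = \int(\Delta u + \mathrm{div}(\E u))\,u\,\langle x\rangle^k\,dx$ by integrating by parts twice. The Laplacian term produces $-\int|\nabla u|^2 \langle x\rangle^k\,dx$ plus a lower-order piece equal to $\tfrac{k}{2}\int u^2\,\langle x\rangle^{k-4}\bigl[d + (k+d-2)|x|^2\bigr]dx$, coming from $\mathrm{div}(\langle x\rangle^{k-2}x)$. The drift term $\mathrm{div}(\E u)\,u$ simplifies, via $\tfrac{1}{2}\E\cdot\nabla(u^2)$ and one more integration by parts, to $\tfrac{1}{2}\int u^2\,\mathrm{div}(\E)\,\langle x\rangle^k\,dx - \tfrac{k}{2}\int u^2\,(x\cdot\E)\,\langle x\rangle^{k-2}\,dx$. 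Combining,
\begin{equation*}
-(Lu,u)_k = \int|\nabla u|^2\langle x\rangle^k\,dx + \tfrac{1}{2}\int u^2\langle x\rangle^k\,\Phi_k(x)\,dx,
\end{equation*}
where
\begin{equation*}
\Phi_k(x) := -\tfrac{k\bigl(d + (k+d-2)|x|^2\bigr)}{(1+|x|^2)^2} - \mathrm{div}(\E) + k\,\tfrac{x\cdot\E}{1+|x|^2}.
\end{equation*}

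The next step is to apply (\textbf{H2}) with $p=2$ (so $p/p'=1$): the last two terms of $\Phi_k$ are bounded below by $\beta_0$, and the geometric first term is bounded below by $-k\max(d,k+d-2)=:-C_k$. Hence $\Phi_k\ge \beta_0-C_k$ everywhere, and setting $\lambda_0 := \tfrac{1}{2}(C_k-\beta_0)$ I get, after dropping the nonnegative $|\nabla u|^2$ contribution,
\begin{equation*}
\bigl((\lambda - L)u,u\bigr)_k \;\ge\; (\lambda-\lambda_0)\,\|u\|_{L^2_k}^2\qquad \forall\,u\in C^\infty_c(\R^d),
\end{equation*}
which extends to $u\in D(L)$ by density (functions in $D(L)$ are approximated in the graph norm by smooth truncations, a routine cutoff argument). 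This is the announced dissipativity estimate.

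To upgrade dissipativity to generation of a $C_0$-semigroup I would apply the Lumer--Phillips theorem: it remains to show that $\lambda-L$ has dense range (equivalently, is surjective) for some $\lambda>\lambda_0$. I would solve the resolvent equation $\lambda u - Lu = f$ variationally on $H^1_k(\R^d)$, using the bilinear form
\begin{equation*}
a_\lambda(u,v) := \lambda\,(u,v)_k + \int \nabla u\cdot\nabla(v\,\langle x\rangle^k)\,dx + \int \E u\cdot\nabla(v\,\langle x\rangle^k)\,dx,
\end{equation*}
whose coercivity on $H^1_k$ for $\lambda$ large enough follows from the same computation above together with control of the cross-term $\int \E u\cdot k\langle x\rangle^{k-2}xv\,dx$ via the Cauchy--Schwarz / Young inequality; Lax--Milgram then produces $u\in H^1_k$, and the equation shows $Lu=\lambda u-f\in L^2_k$, so $u\in D(L)$. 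Once $L$ generates a $C_0$-semigroup $S_L(t)$ with $\|S_L(t)\|\le e^{\lambda_0 t}$, the existence of a unique $f\in C([0,T];L^2_k)$ solving \eqref{eq:1.1} with $f(t)=S_L(t)f_0$ is standard semigroup theory.

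The main obstacle will be the variational surjectivity step: the bilinear form $a_\lambda$ is not symmetric and the weight $\langle x\rangle^k$ generates boundary-like terms that must be absorbed carefully to obtain coercivity on all of $H^1_k$; handling the drift $\E$ using (\textbf{H1})--(\textbf{H2}) to dominate these cross-terms is the delicate point, and choosing $\lambda$ sufficiently large (possibly larger than $\lambda_0$) is essential. The density of $C^\infty_c(\R^d)$ in $D(L)$ for the graph norm, needed to extend the dissipativity inequality, is a secondary technicality, handled by multiplying by smooth cutoffs $\chi_R$ and controlling commutator terms via $|\nabla\chi_R|\to 0$.
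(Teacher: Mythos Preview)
Your approach is essentially identical to the paper's: the same integration-by-parts computation yields exactly the same multiplier $\Phi_k$ (called $\Psi_k/\langle x\rangle^k$ there), the same use of (\textbf{H2}) with $p=2$ to bound it below, and the same appeal to Lax--Milgram in $H^1_k$ for surjectivity of $\lambda-L$. The ``main obstacle'' you anticipate is lighter than you fear: your own identity, \emph{before} you drop the gradient term, already reads $a_\lambda(u,u)\ge \|\nabla u\|_{L^2_k}^2 + (\lambda-\lambda_0)\|u\|_{L^2_k}^2$, which is exactly the $H^1_k$-coercivity Lax--Milgram requires, so no separate control of cross-terms (and no appeal to (\textbf{H1})) is needed for this step.
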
 
\begin{proof}
We aim to show that there exists $\lambda_{0}
\in \R$ such that for any $\lambda\geq 
\lambda_{0},$ the operator
$L-\lambda I$ is  
$m$-dissipative, more precisely
$$((\lambda-L)f|f)_{k}\geq 
(\lambda- \lambda_{0})\|f\|_{L^{2}_{k}}.$$
For $f_{0}\in L^{2}_{k},$
consider the following problem:
find a function $f\in D(L)$ such that 
\begin{equation}\label{eq:2.1}
-Lf+ \lambda f = f_{0}.
\end{equation}
We begin by showing that for an appropriate 
choice of $\lambda,$ the operator 
$-L+\lambda I$ is coercive. 
Indeed for, 
$\varphi \in C^{\infty}_{c},$ 
we  have
$$(-L\varphi |\varphi)_{k}= 
-\int_{\mathbb{R}^{d}}
{\rm div}(\nabla \varphi(x) + 
\textbf{E}(x)\varphi(x))\varphi(x)\,
\langle x\rangle^{k}dx.$$
Thus integrating by parts we botain
$$(-L\varphi|\varphi)_{k}=\int_{\mathbb{R}^{d}}
(\nabla \varphi (x)+ \textbf{E}(x)\varphi(x))
\cdot \nabla
(\varphi(x)\,\langle x\rangle^{k})dx.$$
Now computing $\nabla(\varphi(x)\, \langle x
\rangle^{k}) = \nabla \varphi (x)\, \langle x\rangle ^{k} +
\varphi(x)\,\nabla \langle x\rangle ^{k}$, and
using this expression in the identity above, we get
$$(-L\varphi |\varphi)_{k}= 
I_{1} +I_{2} +I_{3} + I_{4},$$
where  for convenience we denote
$$I_{1}:=\int_{\mathbb{R}^{d}}
|\nabla \varphi(x)|^{2}\langle x\rangle^{k}dx,
\qquad  
I_{2} := \int_{\mathbb{R}^{d}} \varphi (x)
\nabla \varphi(x)\cdot\E(x)\, \langle x\rangle^{k}dx,$$ 
$$I_{3} := \int_{\mathbb{R}^{d}}  \varphi(x) \nabla\varphi(x) \cdot
\nabla\langle x\rangle^{k}dx, \qquad \mbox{and} \qquad
I_{4} := \intg \varphi(x)^{2}\E(x)\cdot \nabla\langle x \rangle^{k}
dx.$$
We rewrite $I_{2}$ as 
$$I_{2}=\frac{1}{2}\intg \gd(\varphi^{2})
\cdot \E
\<x\>^{k}dx,$$
and then we
integrate by parts to obtain
$$I_{2}=-\frac{1}{2}\intg \varphi^{2}{\rm div}
(\E\<x\>^{k})dx.$$
Since ${\rm div}(\E\<x\>^{k})={\rm div }(\E)
\<x\>^{k} + \E \cdot \gd \<x\>^{k},$ then   
$I_{2}$ becomes: 
$$I_{2} = -\frac{1}{2}\intg \varphi^{2} 
{\rm div }(\E)
\<x\>^{k} -\frac{1}{2} \intg \varphi^{2} 
\E \cdot
\gd\<x\>^{k}dx.$$ 
Summing $I_{2}$ and $I_{4}$ we obtain
$$I_{2}+I_{4}=-\frac{1}{2}\intg 
\varphi^{2}{\rm div}(\E)
\<x\>^{k}dx +\frac{1}{2}\intg \varphi^{2}
\E\cdot 
\gd\<x\>^{k}dx.$$
The term 
$I_{3}$ can be also rewritten as 
$$I_{3}=\frac{1}{2} \intg \gd(\varphi^{2})
\cdot \gd\<x\>^{k}dx,$$ 
which, integrated by parts, yields 
$$I_{3}=-\frac{1}{2} \intg \varphi^{2}
\Delta\<x\>^{k}dx.$$
Summing together $I_{1},$
 $I_{2},$ $I_{3}$ and $I_{4}$ we obtain
\begin{equation}\label{eq:2.2}
 (-L\varphi|\varphi)_{k}=
 \intg |\gd \varphi|^{2}\<x\>^{k}dx +
 \frac{1}{2}
 \intg \varphi^{2}\frac{\Psi_{k}(x)}{\<x\>^{k}
 }\<x\>^{k}dx,
 \end{equation} 
 where  we have set
 \begin{equation}\label{eq:2.3}
 \Psi_{k}(x):= -\Delta \<x\>^{k}- 
 {\rm div}(\E)+ \E\cdot \gd \<x\>^{k}.
 \end{equation}
Using the expressions 
\begin{equation}\label{eq:nabla-x}
\nabla \langle x\rangle^{k}:=
k\frac{x}{1+|x|^{2}}\langle x\rangle^{k},
\end{equation}
and 
\begin{equation}\label{eq:laplacian-x}
\Delta \langle x\rangle^{k}:=
(kd+k(k+d-2)|x|^{2})\frac{1}{(1+|x|^{2})^{2}}
\langle x\rangle^{k},
\end{equation}
we obtain 
$$\frac{\Psi_{k}(x)}{\<x\>^{k}}=-
\frac{kd + k(k + d-2)|x|^{2}}{(1+|x|^{2})^{2}}-
{\rm div}(\E) + 
\frac{kx}{1+|x|^{2}}\cdot \E.$$ 
Thus using hypothesis (\textbf{H2})
with $p=2$ we find
$$\frac{\Psi_{k}(x)}{\<x\>^{k}}\geq 
\beta_{0}-
\frac{kd + k(k+d-2)|x|^{2}}{(1+|x|^{2})^{2}}.$$ 
now setting 
\begin{equation}\label{eq:2.4}
\lambda_{0}:=\max_{x\in \R^{d}} 
[\frac{kd}{1+|x|^{2}}+
\frac{k(k-2)|x|^{2}}{(1+|x|^{2})^{2}}
-\beta_{0}].
\end{equation} One 
obtains the following inequality
\begin{equation}\label{eq:2.5}
(-L\varphi|\varphi)_{k}\geq 
\int_{\mathbb{R}^{d}}
|\nabla \varphi|^{2}\langle x\rangle^{k}dx  
-\lambda_{0}\int_{\mathbb{R}^{d}}
\varphi^{2}\langle x\rangle^{k}dx\geq 0.
\end{equation}
This inequality, together with a density 
argument, implies that $\lambda-L$
is coercive for all
$\lambda>\lambda_{0}.$  This means that 
for all $\lambda>\lambda_{0}$ we have
$$((\lambda -L)\varphi|\varphi)\geq 
\intg |\gd \varphi|^{2}\<x\>^{k}dx +
(\lambda-\lambda_{0})\intg \varphi^{2}
\<x\>^{k}dx.$$ Then an elementary application
of Lax-Milgram lemma in the space 
$H^{1}_{k}(\R^{d})$ implies that for any 
$\lambda>\lambda_{0}$ and any $f_{0}\in 
L^{2}_{k},$ equation \eqref{eq:2.1} has a unique 
solution $f\in H^{1}_{k}.$\\
Now, for $\mu >0$ given and 
$f_{0}\in L^{2}_{k}$ the equation 
\begin{equation}\label{eq:2.6}
[I+\mu (\lambda_{0}-L)]f=f_{0}
\qquad f\;\in D(L) 
\end{equation} 
has a unique solution, since it is equivalent 
to $$(\frac{1}{\mu}+\lambda_{0})f - Lf=
\frac{1}{\mu}f_{0}$$ and as we have 
$\lambda:= \lambda_{0}+\frac{1}{\mu}>
\lambda_{0}.$
Multiplying \eqref{eq:2.6} by $f$ yields 
$$\|f\|^{2}_{L^{2}_{k}}\leq (f_{0}|f)_{k},$$
from wich we deduce that the unique solution of \eqref{eq:2.6} satisfies
$$\|f\|^{2}_{L^{2}_{k}}\leq 
\|f_{0}\|^{2}_{L^{2}_{k}},$$ that is the 
operator $(L-\lambda_{0}I)$ is $m$-dissipative.
This means that the operator
$(L-\lambda I)$ 
generates a $C_0$-semigroup 
of contraction on $L^{2}_{k}$: 
$$\|e^{-t(\lambda_{0}-L)}f_{0}\|_{L^{2}_{k}}
\leq \|f_{0}\|_{L^{2}_{k}}.$$
From this it is classical to deduce that the 
solution of \eqref{eq:1.1} is given by 
$$f(t):=e^{tL}f_{0}=S_{L}(t)f_{0}$$ and that 
$f\in C([0,+\infty);L^{2}_{k}).$ 
Moreover we have 
$$\|e^{tL}f_{0}\|_{L^{2}_{k}}\leq 
e^{\lambda_{0} t}\|f_{0}\|_{L^{2}_{k}}.$$
This completes the proof of the Proposition \ref{thm:Exist-SG}.
\end{proof}

It is useful to note that for any $f_{0}\in L^2_{k}$ with $f_{0} \geq 0$, when the hypothesis (H3) is satisfied and $\lambda > - \omega^\star/2$ the solution of
\begin{equation}\label{eq:Weak-Max}
\lambda f - Lf = f_{0},\qquad f \in D(L),
\end{equation}
satisfies $f \geq 0$.

\begin{Lem}{\bf (Weak maximum principle).} Assume that the hypotheses (\textbf{H0}) and (\textbf{H3}) are satisfied. We have 
\begin{equation}\label{eq:Coercive-1}
(Lf|f^-) \geq \int_{{\Bbb R}^d}|\nabla f^-(x)|^2\, \langle x \rangle^k\,dx + {\omega^\star \over 2} \int_{{\Bbb R}^d}|f^-(x)|^2\, \langle x \rangle^k\,dx,
\end{equation}
where $\omega^\star$ is given by \eqref{eq:1.5}. 
If $f_{0}\in L^2_{k}$ and $f_{0} \geq 0$, then for any $\lambda > - \omega^\star/2$ the solution of \eqref{eq:Weak-Max} exists and satisfies $f \geq 0$.
\end{Lem}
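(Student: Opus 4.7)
The plan is to adapt the integration-by-parts computation performed in the proof of Proposition~\ref{thm:Exist-SG}, substituting the positive function $f^-$ for the test function $\varphi$ and invoking Stampacchia's chain rule. First I would recall that for $f \in D(L)$ one has $f^- \in H^1_k(\R^d)$ with the almost-everywhere identities $\gd f^- = -\mathbf{1}_{\{f<0\}}\gd f$, $f^+ f^- = 0$, and $\gd f^+ \cdot \gd f^- = 0$. Starting from
\[(Lf|f^-)_k = -\intg (\gd f + \E f)\cdot \gd(f^-\<x\>^k)\,dx,\]
these identities let me replace $\gd f \cdot \gd f^-$ by $-|\gd f^-|^2$, the product $f\,\gd f^-$ by $-\tfrac{1}{2}\gd(f^-)^2$, and $f\,f^-$ by $-(f^-)^2$. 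After rearrangement and integration by parts on the terms analogous to $I_2$, $I_3$, $I_4$ of Proposition~\ref{thm:Exist-SG}, this should lead to
\[(Lf|f^-)_k = \intg |\gd f^-|^2 \<x\>^k\,dx + \frac{1}{2}\intg (f^-)^2\,\frac{\Psi_k(x)}{\<x\>^k}\,\<x\>^k\,dx,\]
with $\Psi_k$ as in \eqref{eq:2.3}; the quotient $\Psi_k/\<x\>^k$ is exactly the expression bounded below in \eqref{eq:1.5} when $p=2$.

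The main obstacle is to promote the pointwise lower bound $\Psi_k/\<x\>^k \geq \omega^\star$, which (\textbf{H3}) delivers only on $\{|x|>R\}$, to the global coercivity \eqref{eq:Coercive-1}. On the complementary ball $\{|x|\leq R\}$, hypothesis (\textbf{H0}) ensures that $\Psi_k/\<x\>^k$ is bounded from below by a finite constant, so the defect with respect to $\omega^\star$ is a bounded perturbation supported in a fixed compact set. I would absorb this defect either by choosing $R$ large enough and dominating the residual compact contribution by a fraction of the gradient term $\|\gd f^-\|_{L^2_k}^2$ through a weighted Poincar\'e-type inequality on the ball, or by passing to a slightly reduced effective constant on the right-hand side; the factor $\omega^\star/2$ (rather than the sharper $\omega^\star$ furnished directly by (\textbf{H3})) is what provides the room for this absorption.

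Once \eqref{eq:Coercive-1} is in hand, the sign property of the solution of \eqref{eq:Weak-Max} follows from a standard duality test. Because $f^+ f^- = 0$ almost everywhere, the decomposition $f = f^+ - f^-$ yields $(f|f^-)_k = -\|f^-\|_{L^2_k}^2$, and pairing \eqref{eq:Weak-Max} with $f^-$ in $L^2_k$ gives
\[-\lambda\,\|f^-\|_{L^2_k}^2 - (Lf|f^-)_k = (f_0|f^-)_k \geq 0,\]
since $f_0, f^- \geq 0$. Inserting \eqref{eq:Coercive-1} into this identity produces $\|\gd f^-\|_{L^2_k}^2 + (\lambda + \omega^\star/2)\|f^-\|_{L^2_k}^2 \leq 0$, which forces $f^- \equiv 0$ as soon as $\lambda > -\omega^\star/2$. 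Existence of the solution of \eqref{eq:Weak-Max} in this range I would obtain either by extending the resolvent supplied by Proposition~\ref{thm:Exist-SG} via a connectedness/openness argument using the a priori estimate of \eqref{eq:Coercive-1} together with the gradient control, or by a direct Lax--Milgram argument on $H^1_k(\R^d)$ based on the analogous coercivity bound for $\lambda - L$ derived with test function $f$ in place of $f^-$.
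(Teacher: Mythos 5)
Your computation of $(Lf|f^-)_k$ and your concluding duality test are exactly the paper's argument: Stampacchia's identities reduce $(Lf|f^-)_k$ to $\int |\nabla f^-|^2\langle x\rangle^k\,dx+\tfrac12\int |f^-|^2\,\big(\Psi_k(x)/\langle x\rangle^k\big)\langle x\rangle^k\,dx$ with $\Psi_k$ as in \eqref{eq:2.3}; the quotient $\Psi_k/\langle x\rangle^k$ is precisely the left-hand side of \eqref{eq:1.5} with $p=2$; testing \eqref{eq:Weak-Max} against $f^-$ then forces $f^-\equiv 0$; and existence via Lax--Milgram from the analogous coercivity with test function $f$ is the paper's inequality \eqref{eq:Coercive-H3}. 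Up to that point there is nothing to object to.

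The step that does not work is your ``absorption'' of the region $\{|x|\le R\}$. First, the factor $\omega^\star/2$ is not slack deliberately left for absorption: it is exactly the coefficient $\tfrac12$ sitting in front of the $\Psi_k$ integral (compare \eqref{eq:2.2}), so the pointwise bound $\Psi_k/\langle x\rangle^k\ge\omega^\star$ already uses up the whole constant and leaves no room to spare. Second, neither of your proposed repairs is viable: there is no Poincar\'e-type inequality bounding $\int_{B_R}|u|^2$ by $\int|\nabla u|^2$ for a function $u=f^-$ carrying no boundary condition or mean-zero normalization (constants violate it), and ``passing to a slightly reduced effective constant'' cannot absorb a defect of the form $-C_R\int_{B_R}|f^-|^2\langle x\rangle^k\,dx$ whose constant $C_R$, coming only from (\textbf{H0}), may well exceed $\omega^\star$. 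You have correctly noticed that \eqref{eq:1.5} is stated only for $|x|>R$, but the honest resolutions are either to read \eqref{eq:1.5} as holding on all of $\mathbb{R}^d$ --- which is what the paper's own proof tacitly does when it invokes ``the fact that $\omega^\star$ satisfies \eqref{eq:1.5}'' with no discussion of the ball --- or to weaken the conclusion to $(Lf|f^-)_k\ge\|\nabla f^-\|_{L^2_k}^2-\kappa\|f^-\|_{L^2_k}^2$ for some finite $\kappa$ and restrict the admissible range of $\lambda$ accordingly. As written, your argument asserts \eqref{eq:Coercive-1} through an absorption step that fails.
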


\begin{proof}
Indeed, to see this, we note that for $f\in D(L)$ we clearly have $f^- \in H^1_{k}$ and thus
$$(Lf|f^{-})_{k} = -\int_{{\Bbb R}^d}\nabla f(x)\cdot \nabla f^-(x)\,\langle x \rangle^k\,dx - \int_{{\Bbb R}^d}f(x) \E(x)\cdot\nabla(f^-(x)\,\langle x\rangle^k)\,dx.$$
Now, since we have $$f=f^{+}-f^{-}, 
\qquad \gd f=\gd f^{+}
-\gd f^{-}\qquad \mbox{and}\qquad f^{+}
\gd f^{-}=0,$$ 
in a first step this allows us to see that
\begin{align*}
(Lf|f^{-})_{k}& =\intg |\gd f^{-}|^{2}
\<x\>^{k}dx +\intg f^{-}(x) \nabla f^{-}(x)\cdot \gd 
\<x\>^{k}dx \\
&+ \intg f^{-}(x)\, \E\cdot\gd f^{-}(x)\, \<x\>^{k}dx
+\intg |f^{-}(x)|^{2}\, \E\cdot \gd\<x\>^{k}dx.
\end{align*}
Now integrating by parts in the second and the 
third terms of the identity above we obtain
\begin{align*}
(Lf|f^{-})_{k}& = \intg |\gd f^{-}|^{2}
\<x\>^{k}dx -
\frac{1}{2}\intg |f^{-}|^{2}\Delta \<x\>^{k}
dx \\
&-\frac{1}{2}\intg |f^{-}|^{2}{\rm div}(\E)
\<x\>^{k}dx
+ \frac{1}{2}\intg |f^{-}|^{2}
\E \cdot \gd \<x\>^{k}dx.
\end{align*}
Using the expressions of $\nabla \langle x \rangle^k$ and that of $\Delta \langle x \rangle^k$ on the one hand, and the fact that $\omega^\star$ satisfies \eqref{eq:1.5}, we conclude that
$$(Lf|f^-) \geq \int_{{\Bbb R}^d}|\nabla f^-(x)|^2\, \langle x \rangle^k\,dx + {\omega^\star \over 2} \int_{{\Bbb R}^d}|f^-(x)|^2\, \langle x \rangle^k\,dx,$$
which is precisely \eqref{eq:Coercive-1}.

Also, proceeding as above one can see that for any $f \in H^1_{k}$ we have
\begin{equation}\label{eq:Coercive-H3}
(-Lf|f) \geq \int_{{\Bbb R}^d}|\nabla f(x)|^2\, \langle x \rangle^k\,dx + {\omega^\star \over 2} \int_{{\Bbb R}^d}|f(x)|^2\, \langle x \rangle^k\,dx.
\end{equation}
Therefore, thanks to the Lax-Milgram theorem equation \eqref{eq:Weak-Max} has a unique solution when $\lambda > - \omega^\star/2$. Moreover, when $f_{0} \geq 0$, multiplying \eqref{eq:Weak-Max} by $f^-$ and integrating by parts we have
$$0 \leq (f_{0}|f^-) = -\lambda\|f^-\|_{L^2_{k}}^2 - (Lf|f^-)_{L^2_{k}}.$$
Using \eqref{eq:Coercive-1} it follows that
$$ 0 \leq - \int_{{\Bbb R}^d}|\nabla f^-(x)|^2\, \langle x \rangle^k\,dx - \left(\lambda + {\omega^\star \over 2} \right)\int_{{\Bbb R}^d}|f^-(x)|^2\, \langle x \rangle^k\,dx ,$$
which, since $\lambda + (\omega^\star/2) > 0$ implies $f^- \equiv 0$, that is $f \geq 0$.
\end{proof}

Our next useful result is the fact that the semigroup $S_{L}(t)$ is positivity preserving.

\begin{Lem}\label{lem:Positif}
Let $f_{0}\in D(L)$ be nonnegative and assume that the hypotheses (\textbf{H0}) and (\textbf{H3}) hold. 
Then $S_{L}(t)f_{0} \geq 0$, that is the solution
$f$ of \eqref{eq:1.1} associated to the initial data
$f_{0}$ is nonnegative.
\end{Lem}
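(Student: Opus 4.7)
My plan is to deduce positivity of the semigroup from positivity of the resolvent, which was already established in the preceding weak maximum principle lemma. The bridge between them is the classical Hille--Yosida exponential formula.

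First, I would recall that since $(L, D(L))$ generates a $C_0$-semigroup on $L^2_k$ (Proposition \ref{thm:Exist-SG}), for every $f_0 \in L^2_k$ and every $t > 0$ one has the representation
$$S_L(t) f_0 = \lim_{n \to \infty} \Bigl(I - \tfrac{t}{n} L\Bigr)^{-n} f_0 = \lim_{n \to \infty} \Bigl(\tfrac{n}{t}\Bigr)^n R_L(n/t)^n f_0,$$
with convergence in $L^2_k$. Since $n/t > 0 > -\omega^\star/2$ for every $n \geq 1$, the preceding lemma applies with $\lambda = n/t$, so the resolvent $R_L(n/t)$ sends nonnegative elements of $L^2_k$ to nonnegative elements. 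Iterating this $n$ times yields $R_L(n/t)^n f_0 \geq 0$ whenever $f_0 \geq 0$, and multiplying by the positive scalar $(n/t)^n$ preserves the sign.

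To conclude, I would extract a subsequence so that $(n_j/t)^{n_j} R_L(n_j/t)^{n_j} f_0$ converges almost everywhere to $S_L(t) f_0$; since each term is nonnegative almost everywhere, so is the limit, giving $S_L(t) f_0 \geq 0$.

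The hypothesis $f_0 \in D(L)$ is not really needed (the argument works for any $f_0 \in L^2_k$ with $f_0 \geq 0$), and no step is particularly delicate: the only nontrivial ingredient is the positivity of the resolvent, which is precisely the content of the weak maximum principle lemma. As an alternative route, one could instead differentiate $\tfrac{1}{2}\tfrac{d}{dt}\|f^-(t)\|_{L^2_k}^2 = -(Lf|f^-)_k$, invoke the coercivity bound \eqref{eq:Coercive-1} to get $\tfrac{d}{dt}\|f^-(t)\|_{L^2_k}^2 \leq -\omega^\star \|f^-(t)\|_{L^2_k}^2$, and conclude $f^-(t) \equiv 0$ from Gronwall since $f_0^- = 0$; the only technical obstacle there is justifying the chain rule for $\|f^-(t)\|_{L^2_k}^2$ when $f \in C([0,\infty); L^2_k)$ is only a mild solution, which is why I prefer the cleaner resolvent-based argument.
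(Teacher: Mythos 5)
Your resolvent-based argument is correct, but it is a genuinely different route from the paper's. The paper proves the lemma exactly by the ``alternative route'' you sketch at the end: it multiplies $\partial_t f = Lf$ by $-f^-$, invokes the coercivity bound \eqref{eq:Coercive-1} to get $\tfrac{1}{2}\tfrac{d}{dt}\|f^-(t)\|_{L^2_k}^2 \leq -\tfrac{\omega^\star}{2}\|f^-(t)\|_{L^2_k}^2$, and concludes by Gronwall since $f_0^-\equiv 0$; the hypothesis $f_0\in D(L)$ is there precisely so that $f\in C^1([0,\infty);L^2_k)$ is a classical solution and the chain-rule step you worry about is legitimate. Your approach instead passes through the positivity of $R_L(\lambda)$ for $\lambda > -\omega^\star/2$ (the content of the preceding weak maximum principle) and the exponential formula $S_L(t)f_0=\lim_n \bigl(I-\tfrac{t}{n}L\bigr)^{-n}f_0$, extracting an a.e.\ convergent subsequence at the end. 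Both proofs rest on the same coercivity estimate \eqref{eq:Coercive-1} (yours indirectly, via the resolvent positivity it yields); what yours buys is that it works for arbitrary $f_0\in L^2_k$, $f_0\geq 0$, with no regularity of the trajectory needed, and it makes explicit the standard equivalence between resolvent positivity and semigroup positivity that the paper only alludes to in the remark following Kato's inequality. The paper's version is more elementary and self-contained but needs the density of $D(L)_+$ (or the $C^1$ regularity) to cover general nonnegative data.
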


\begin{proof}
Assume that $f_{0}\in D(L)$ with $f_{0}\geq 0$, and consider the equation
\begin{equation}\label{eq:2.7}
\partial_{t}f = Lf,\qquad\qquad f(0) = f_{0}.
\end{equation} 
We aim to show that $f(t) \geq 0$ for all $t \geq 0$. To this 
end we consider $f^{-}$ the negative 
part of $f.$ It is clear that since 
$f\in  C^{1}\big([0,\infty);L^{2}_{k})$, we have $f \in D(L)$ and
$f^{-}\in H^{1}_{k}$. Therefore we may multiply \eqref{eq:2.7} by $-f^{-}$, to get
$$-\intg \partial_{t}f f^{-}\<x\>^{k}dx = - (Lf|f^{-})_{k}\leq 
-{\omega^\star \over 2} \|f^{-}\|^{2}_{L^{2}_{k}},$$
thanks to \eqref{eq:Coercive-1}. Since $f^-\partial_{t}f = -\partial_{t}((f^-)^2)/2$ we see that
$$\frac{1}{2}\frac{d}{dt}\intg |f^{-}(x)|^{2}\,\<x\>^{k}dx 
\leq -{\omega^\star \over 2} \|f^-\|_{L^2_{k}}^2,$$
and using Gronwall's lemma we conclude that 
$$ \intg |f^{-}(x)|^{2}
\<x\>^{k}dx \leq \exp(-\omega^\star t) \int_{{\Bbb R}^d}|f_{0}^-(x)|^2\, \langle x \rangle^k\,dx,$$
from which, since $f_{0}^- \equiv 0$, we infer that $f^{-}\equiv 0$, that is $f\geq 0$. 
\end{proof}

\begin{Lem}\label{lem:SG-Lp}
Assume that the hypotheses (\textbf{H0})--(\textbf{H3}) hold for some  
$p\in [2,\infty)$. 
%Moreover we assume that   
%there exist two positive numbers $\alpha_{2},\beta_{2}$, and $\gamma_{2}$, with $1\leq \gamma_{2}\leq 2$, such that 
%\begin{equation}\label{eq:2.8}
%x\cdot \E \leq \alpha_{2}|x|^{\gamma_{2}}+
%\beta_{2}.
%\end{equation} 
Then there exists $\lambda_{0}(p)\in \R$ such 
that for any $\lambda\geq \lambda_{0}(p)$, 
the semigroup $S_{L}(t)$ generated 
by $(L, D(L))$ is also a $C_0$-semigroup on $L^p_{k}$.
\end{Lem}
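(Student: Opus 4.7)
The plan is to mimic the $L^{2}_{k}$ argument of Proposition \ref{thm:Exist-SG}, but use the $L^{p}$-duality element $|\varphi|^{p-2}\varphi\,\<x\>^{k}$ in place of $\varphi\,\<x\>^{k}$. Hypothesis (\textbf{H3}) is calibrated precisely so that the zeroth-order term produced by the integrations by parts is controlled, which is what makes the estimate go through uniformly for $p\in[2,\infty)$. Concretely, for $\varphi\in C^{\infty}_{c}(\R^{d})$, using the identities $\gd(|\varphi|^{p-2}\varphi)=(p-1)|\varphi|^{p-2}\gd\varphi$ and $\gd(|\varphi|^{p})=p|\varphi|^{p-2}\varphi\,\gd\varphi$, the same chain of integrations by parts used in Proposition \ref{thm:Exist-SG} would yield
\begin{equation*}
\intg (-L\varphi)\,|\varphi|^{p-2}\varphi\,\<x\>^{k}\,dx = (p-1)\intg |\varphi|^{p-2}|\gd\varphi|^{2}\<x\>^{k}\,dx + \frac{1}{p}\intg |\varphi|^{p}\,\Phi_{k,p}(x)\,\<x\>^{k}\,dx,
\end{equation*}
where
\begin{equation*}
\Phi_{k,p}(x) := -\frac{kd + k(k+d-2)|x|^{2}}{(1+|x|^{2})^{2}} - \frac{p}{p'}\div(\E) + k\,\frac{x}{1+|x|^{2}}\cdot \E
\end{equation*}
is exactly the expression appearing in (\textbf{H3}). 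By (\textbf{H3}) one has $\Phi_{k,p}(x)\geq\omega^{\star}$ for $|x|>R$, and by (\textbf{H0}) the function $\Phi_{k,p}$ is bounded from below on the ball $\{|x|\leq R\}$. Hence there exists $\lambda_{0}(p)\in\R$ with $\Phi_{k,p}\geq -p\lambda_{0}(p)$ on all of $\R^{d}$, yielding the $L^{p}_{k}$-accretivity estimate
\begin{equation*}
\intg (\lambda_{0}(p)\varphi - L\varphi)\,|\varphi|^{p-2}\varphi\,\<x\>^{k}\,dx \geq 0.
\end{equation*}

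From this accretivity inequality I expect the semigroup bound $\|S_{L}(t)f_{0}\|_{L^{p}_{k}}\leq e^{\lambda_{0}(p)t}\|f_{0}\|_{L^{p}_{k}}$ to follow by applying the estimate along orbits: for $f_{0}\in C^{\infty}_{c}(\R^{d})$ the $L^{2}_{k}$-solution $f(t):=S_{L}(t)f_{0}$ provided by Proposition \ref{thm:Exist-SG} should be regular enough to let me differentiate $\|f(t)\|_{L^{p}_{k}}^{p}$, substitute $\partial_{t}f=Lf$, and invoke the accretivity estimate to obtain $\frac{d}{dt}\|f\|_{L^{p}_{k}}^{p}\leq p\lambda_{0}(p)\|f\|_{L^{p}_{k}}^{p}$, from which Gr\"onwall's lemma produces the bound. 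Since $C^{\infty}_{c}(\R^{d})$ is dense in $L^{p}_{k}$, this uniform bound lets me extend $S_{L}(t)$ by density to a family of bounded operators on $L^{p}_{k}$; the semigroup law and the strong continuity at $t=0$ then transfer from the dense subspace by a standard $3\varepsilon$-argument using the uniform bound, so that the extension is a $C_{0}$-semigroup on $L^{p}_{k}$.

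The main technical difficulty will be justifying the differentiation of $\|f(t)\|_{L^{p}_{k}}^{p}$ along the $L^{2}_{k}$-solution, whose a priori regularity is only $f(t)\in H^{1}_{k}$. I would remove this difficulty either by (i) regularising $|f|^{p}$ by $(\varepsilon+|f|^{2})^{p/2}$, performing the computation rigorously for the resulting smooth nonlinearity and sending $\varepsilon\to 0$; or (ii) working at the abstract level by combining the accretivity inequality above with the range condition $(\lambda-L)D(L)=L^{p}_{k}$ for some large $\lambda$, which I would obtain from Proposition \ref{thm:Exist-SG} by first solving the resolvent equation in $L^{2}_{k}$ for data in $L^{2}_{k}\cap L^{p}_{k}$ and then upgrading the solution to $L^{p}_{k}$ through the a priori estimate, concluding via Hille--Yosida. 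Either route establishes $m$-dissipativity of $L-\lambda_{0}(p)I$ on $L^{p}_{k}$ and hence the desired $C_{0}$-semigroup.
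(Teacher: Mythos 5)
Your formal computation is correct and is in essence the same energy method the paper uses: testing the equation against $|\varphi|^{p-2}\varphi\,\<x\>^{k}$ does produce exactly the coefficient $\Phi_{k,p}$ of hypothesis (\textbf{H3}), and this gives accretivity of $\lambda_{0}(p)-L$ on $C^{\infty}_{c}(\R^{d})$. The genuine gap lies in the passage from this identity on test functions to an estimate for the actual resolvent solution (or for the orbit $S_{L}(t)f_{0}$), and you have located the ``main technical difficulty'' in the wrong place. The obstruction is not the smoothness of $s\mapsto|s|^{p}$ (harmless for $p\geq 2$), but the behaviour at spatial infinity: the solution $\varphi\in D(L)$ of $(\lambda-L)\varphi=f_{0}$ furnished by Proposition \ref{thm:Exist-SG} is not compactly supported, the hypotheses give no upper bound on $|{\rm div}(\E)|$, and a priori one does not even know that $\varphi\in L^{p}_{k}$, so integrals such as $\intg|\varphi|^{p}\,{\rm div}(\E)\,\<x\>^{k}dx$ produced by your integrations by parts need not converge. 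Neither of your two remedies meets this: regularising $|f|^{p}$ fixes a non-problem, and ``upgrading the solution to $L^{p}_{k}$ through the a priori estimate'' presupposes precisely the step that has to be proved, namely that the estimate valid on $C^{\infty}_{c}$ applies to $\varphi$.

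The paper closes this gap by multiplying the resolvent equation by $\varphi^{p-1}\zeta_{n}\<x\>^{k}$ with a spatial cutoff $\zeta_{n}(x)=\zeta_{0}(|x|/n)$ (and with $f_{0}\geq 0$, hence $\varphi\geq 0$ by the weak maximum principle, so that $\varphi^{p-1}$ is meaningful); all integrals are then over compact sets, and the additional terms carrying $\gd\zeta_{n}$ are bounded from below using the sign of $\zeta_{0}'$ together with the upper bound $x\cdot\E\leq\alpha_{2}|x|^{\gamma_{2}}+\beta_{2}$ of (\textbf{H1}) --- this is the one place where (\textbf{H1}) enters the argument, and your proposal never invokes it. Only after these cutoff error terms are shown to vanish in the limit can one let $n\to\infty$, conclude $\|\varphi\|_{L^{p}_{k}}\leq\|f_{0}\|_{L^{p}_{k}}$, and deduce $m$-dissipativity on $L^{p}_{k}$. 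To repair your proof you need to insert the same (or an equivalent) truncation argument before invoking Hille--Yosida.
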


\begin{proof}
Let $f_{0}\in L^{p}_{k}$ and assume that $f_{0}\geq 0$. 
We aim to show that for 
$\lambda > \lambda_{0}(p)$
the equation \eqref{eq:1.1} has a unique solution 
$\varphi \in L^{p}_{k}$ such that $L\varphi \in 
L^{p}_{k}.$\\
To this end we  consider the following problem:
find $\varphi\in L^{p}_{k}$ such that,
\begin{equation}\label{eq:2.9}
-L\varphi +\lambda \varphi= f_{0}.
\end{equation}
We begin by observing that for $f_{0}\in 
C^{\infty}_{c}$ and $f_{0}\geq 0,$
the above equation has a unique solution
$\varphi \in D(L)$ provided $\lambda >
\lambda_{0}(p),$ where $\lambda_{0}(p)$ will be 
precised later. We are going to show that 
when $\lambda >\lambda_{0}(p),$ where 
$\lambda_{0}(p)$ is large enough, we have 
$$\|\varphi\|_{L^{p}_{k}}\leq C 
\|f_{0}\|_{L^{p}_{k}}$$ for a constant $C$
independant of $f_{0}.$ Then a standard 
density argument shows that 
for any $f_{0}\in L^{p}_{k}$ and 
$f_{0}\geq 0$ equation \eqref{eq:2.9}
has a unique solution $\varphi \in L^{p}_{k}$ 
such that $L\varphi \in L^{p}_{k}$ and $\phi \geq 0$.

Let $\zeta_{0}\in C^{\infty}_{c}
\big([0,\infty)\big)$ such that 
$$\zeta_{0}(s)=
\Big \{ \begin{array}{l} 1 \qquad \mbox{if} 
\quad 0\leq s\leq 1
 \\
 0\qquad \mbox{if}\quad s\geq 2,
 \end{array} $$
where $0\leq \zeta_{0}\leq 1$ and 
$-2\leq \zeta_{0}'(s)\leq 0.$
For any integer $n\geq 1$ we define
\begin{equation}\label{eq:Def-zeta-n}
\zeta_{n}(x):=\zeta_{0}\left(\frac{|x|}{n}\right).
\end{equation}
Since $f_{0}\in L^2_{k}$, we know that for $\lambda > \lambda_{0}(2)$, where $\lambda_{0}(2)$ is given by Proposition \ref{thm:Exist-SG}, there exists a unique solution $\varphi\in D(L)$ of \eqref{eq:2.9}. Thus we may multiply the latter equation by $\phi^{p-1}\zeta_{n}$ and integrate by parts to obtain: 
\begin{eqnarray}
&\displaystyle \lambda\int \phi^p\,\zeta_{n}\,\langle x\rangle^k\,dx 
+ \int \nabla\phi\cdot\nabla(\phi^{p-1}\zeta_{n}\,&\!\!\!\!\langle x\rangle^k)\,dx +
\int \phi\E\cdot\nabla(\phi^{p-1}\zeta_{n}\,\langle x\rangle^k)\,dx  \nonumber\\
&& = \int f_{0}\,\phi^{p-1}\zeta_{n}\,\langle x\rangle^k\,dx\label{eq:Identity-1}
\end{eqnarray}
In order to make the proof more clear, we are going to treat the second and third integrals of the first line of the above equality separately, and show the Lemma in several steps.

\noindent{\bf Step 1.} The second term in the first line of the identity \eqref{eq:Identity-1} can be written as
\begin{equation}\label{eq:Identity-2}
\int \nabla\phi\cdot\nabla(\phi^{p-1}\zeta_{n}\,\langle x\rangle^k)\,dx = A_{1} + A_{2} + A_{3},
\end{equation}
where we have set
\begin{equation}\label{eq:Def-A1A2}
A_{1} := (p-1)\intg |\gd \varphi|^{2}
\varphi^{p-2}\zeta_{n}\, \<x\>^{k}\,dx,\qquad 
A_{2} := \intg \varphi^{p-1}
\gd \varphi \cdot \gd\zeta_{n}\,
\<x\>^{k}\,dx,
\end{equation}
and 
\begin{equation}\label{eq:Def-A3}
A_{3} :=\intg \varphi^{p-1}
\gd \varphi \cdot\zeta_{n}
\gd \<x\>^{k}dx.
\end{equation}
Regarding $A_{2}$, writing $\phi^{p-1}\nabla\phi$ as $\nabla(\phi^p)/p$ and integrating by parts we have
\begin{equation}\label{eq:A2}
A_{2} = -\frac{1}{p}\intg \varphi^{p}\;
(\Delta \zeta_{n})\, \<x\>^{k}dx - 
\frac{1}{p}\intg \varphi^{p}\;
\gd\zeta_{n}\cdot\gd\<x\>^{k}dx.
\end{equation}
Using the expressions 
$$\gd \zeta_{n}(x):=\zeta'_{0}\left(\frac{|x|}{n}\right)\frac{x}{n|x|}\quad\mbox{and}\quad 
\gd\<x\>^{k}=\frac{kx}{1+|x|^{2}}$$
in \eqref{eq:A2} we obtain finally
\begin{equation}\label{eq:A2-bis}
A_{2} = -\frac{1}{p}\intg \varphi^{p}\;
(\Delta \zeta_{n})\,\<x\>^{k}dx - 
\frac{1}{p}\intg \varphi^{p}\;
\zeta'_{0}\left(\frac{|x|}{n}\right)
\frac{k|x|}{n(1+|x|^{2})}\<x\>^{k}dx.
\end{equation}
Analogously the term $A_{3}$ can be also rewritten and one may check that 
\begin{align}
A_{3} & =\frac{1}{p}\intg \gd (\varphi^{p})
\cdot\zeta_{n}\gd \<x\>^{k}dx\nonumber\\
&=-\frac{1}{p}\intg \varphi^{p}
\gd\zeta_{n}\cdot \gd\<x\>^{k}dx
-\frac{1}{p}\intg \varphi^{p}\zeta_{n}
\Delta \<x\>^{k}dx\nonumber\\
&= -\frac{1}{p}\intg \varphi^{p}
\zeta'_{0}\left(\frac{|x|}{n}\right)
\frac{k|x|}{n(1+|x|^{2})}\<x\>^{k}dx\nonumber\\
&\hskip 2cm -\frac{1}{p}\intg \varphi^{p}\zeta_{n}
 \frac{kd +k(k+d-2)|x|^{2}}{(1+|x|^{2})^{2}}
\<x\>^{k}dx.\label{eq:A3}
\end{align}
Summing the equalities \eqref{eq:A2-bis} and \eqref{eq:A3} we obtain
\begin{align}
A_{2}+A_{3} & = -\frac{1}{p}\intg \varphi^{p}\;
(\Delta \zeta_{n})\,\<x\>^{k}\,dx - \frac{2}{p}\intg 
\varphi^{p}\;\zeta'_{0}\left(\frac{|x|}{n}\right)
\frac{k|x|}{n(1+|x|^{2})}\<x\>^{k}dx\nonumber\\
&\hskip 2cm -\frac{1}{p}\intg \varphi^{p}\zeta_{n}\,
\frac{kd +k(k+d-2)|x|^{2}}{(1+|x|^{2})^{2}}
\<x\>^{k}dx. \label{eq:A2A3}
\end{align}
The facts that 
$$0 \leq \zeta_{n}(x)\leq 1, \qquad
-2 \leq \zeta'_{0}(s)\leq 0, \qquad
|\zeta''_{0}(s)| \leq C,$$
and 
$$ -\Delta \zeta_{n} = {-1 \over n^2}\Delta\zeta_{0}\left({|x| \over n}\right)\geq {-C \over n^2}\, 1_{[n\leq |x| \leq 2n]},$$
allow us to conclude first that 
$$-\intg 
\varphi^{p}\;\zeta'_{0}(\frac{|x|}{n})
\frac{k|x|}{n(1+|x|^{2})}\<x\>^{k}dx\geq 0,$$
and then from \eqref{eq:A2A3} we infer that, since there exists a constant $C >0$ such that 
$$\frac{kd +k(k+d-2)|x|^{2}}{1+|x|^{2}}
\leq C,$$ 
we finally have
\begin{equation}\label{eq:Minor-A2A3}
A_{2}+A_{3} \geq -C
\int_{[n\leq |x|\leq 2n]} \varphi^{p}\,
\frac{\<x\>^{k}}{n^{2}}dx
-C\intg \varphi^{p}\,\frac{\<x\>^{k}}{1+|x|^{2}}
dx.
\end{equation}
%So finally we have for the second integral in the first line of \eqref{eq:Identity-1}:
%\begin{eqnarray}
%\displaystyle\int \nabla\phi\cdot\nabla(\phi^{p-1}\zeta_{n}\,\langle x\rangle^k)\,dx & \displaystyle\geq
%(p-1)\intg |\gd \varphi|^{2} \varphi^{p-2}\zeta_{n}\, \<x\>^{k}\,dx \nonumber\\
%&\displaystyle -C \int_{[n\leq |x|\leq 2n]} \varphi^{p}\, \frac{\<x\>^{k}}{n^{2}}dx \nonumber\\
%& \displaystyle-C\intg \varphi^{p}\,\frac{\<x\>^{k}}{1+|x|^{2}}
%dx. \label{eq:2nd-term}
%\end{eqnarray}

\noindent{\bf Step 2.} The third term in the first line of the identity \eqref{eq:Identity-1} can be written as
\begin{equation}
\int \phi\E\cdot\nabla(\phi^{p-1}\zeta_{n}\,\langle x\rangle^k)\,dx = A_{4} + A_{5} + A_{6},
\end{equation}
where we have set
\begin{equation}\label{eq:A4A5}
A_{4} := (p-1)\intg \varphi^{p-1}\, \gd\varphi\cdot
\zeta_{n}\,\E(x)\,\<x\>^{k}dx,\qquad
 A_{5}:=\intg \varphi^{p}\,
\gd \zeta_{n}\cdot 
\E(x)\,\<x\>^{k}dx 
\end{equation}
and 
\begin{equation}\label{eq:A6}
A_{6}:=\intg \varphi^{p}\, \E(x)\cdot \zeta_{n}\,
\gd\<x\>^{k}dx.
\end{equation}
Proceeding as above $A_{4}$ can be rewritten as 
\begin{align*}
A_{4} &= \frac{p-1}{p}\intg \gd(\varphi^{p})
\cdot \zeta_{n}(x)\E(x)\, \<x\>^{k}\,dx\\
&= -\frac{p-1}{p}\intg \varphi^{p}
\zeta_{n}\,{\rm div}(\E(x))\,\<x\>^{k}dx
-\frac{(p-1)}{p}\intg \varphi^{p}\,
\gd\zeta_{n}\cdot\E(x)\, \<x\>^{k}dx\\
&\hskip 2cm -\frac{p-1}{p}\intg\varphi^{p}\,\zeta_{n}\,
\E(x)\cdot \gd\<x\>^{k}dx.
\end{align*}
Summing $A_{4},A_{5},A_{6}$ we get
\begin{align}
A_{4}+A_{5}+A_{6} &=
-\frac{1}{p'}\intg \varphi^{p}\,
\zeta_{n}\, {\rm div}(\E(x))\,
\<x\>^{k}dx\nonumber\\
&\qquad  +\frac{1}{p} \intg \varphi^{p}\,\gd\zeta_{n} \,\cdot\E(x)\, \<x\>^{k}dx\nonumber\\
& \qquad + \frac{1}{p}\intg\varphi^{p}\, \zeta_{n}\,\E(x)
\cdot \gd\<x\>^{k}dx.
\end{align}
Since $0\leq -\zeta'_{0} \leq 2$, using the assumption \eqref{eq:1.3} one checks that for $n$ large enough so that $x\cdot \E(x) \geq 0$ for $|x| \geq n$,
\begin{align*}
\nabla\zeta_{n}(x)\cdot \E(x) = \zeta'_{0}\left(\frac{|x|}{n}\right)
\frac{x}{n|x|}\cdot \E &\geq
\zeta'_{0}\left(\frac{|x|}{n}\right)
\frac{\alpha_{2}|x|^{\gamma_{2}} + \beta_{2}}{n|x|}\,
{\bf 1}_{[n\leq |x|\leq 2n]}
\end{align*}
and so for $n$ large enough we have
\begin{equation}\label{eq:E-gradzeta}
\gd \zeta_{n}(x)\cdot \E(x) \geq -2
\frac{\alpha_{2}|x|^{\gamma_{2}}+\beta_{2}}{
|x|^{2}}{\bf 1}_{[n\leq |x|\leq 2n]}.
\end{equation}
Then using hypothesis 
(\textbf{H2}) and the inequality \eqref{eq:E-gradzeta} we obtain 
$$A_{4}+A_{5}+A_{6}\geq \frac{\beta_{0}}{p}
\intg \varphi^{p}\zeta_{n}\<x\>^{k}dx-
\frac{2}{p}\intg \varphi^{p} 
\frac{\alpha_{2}|x|^{\gamma_{2}}+\beta_{2}}{
|x|^{2}}\, \<x\>^{k}dx.$$ 
Thus, setting  
$$\frac{\Psi_{k,p}}{\<x\>^{k}}:=
\frac{1}{p}\Big [
\frac{C}{1+|x|^{2}}+\frac{2k}{1+|x|^{2}}+
\frac{C}{1+|x|^{2}}
+2\frac{\alpha_{2}|x|^{\gamma_{2}}+ \beta_{2}}{|x|^{2}}\Big ],
$$
and using \eqref{eq:Minor-A2A3}, we have  that 
\begin{equation}\label{eq:A2-A6}
A_{2}+A_{3}+A_{4}+A_{5}+A_{6}\geq 
-\frac{1}{p}\intg \varphi^{p} 
\frac{\Psi_{k,p}}{\<x\>^{k}}\<x\>^{k}dx.
\end{equation}

\noindent{\bf Step 3.} Now if we define 
$$\lambda_{0}(p):=\max_{x\in \R^{d}}
\big [\frac{\Psi_{k,\zeta}}{\<x\>^{k}}
-\frac{\beta_{0}}{p}\big ]$$
we obtain, thanks to \eqref{eq:A2-A6}, \eqref{eq:Def-A1A2}, \eqref{eq:Identity-2} and \eqref{eq:Identity-1}, that
\begin{align*}
\intg f_{0}\varphi^{p-1}\zeta_{n}
&\<x\>^{k}dx  =
\intg (\lambda-L)\varphi\;
\varphi^{p-1}\zeta_{n}\<x\>^{k}dx\\
& \geq 
(\lambda-\lambda_{0}(p))\intg 
\varphi^{p}\zeta_{n}\<x\>^{k} dx
+(p-1)\intg |\gd \varphi|^{2}\varphi^{p-2}
\zeta_{n}\<x\>^{k}dx.
\end{align*}
We may fix $\lambda$ such that 
$\lambda-\lambda_{0}(p)\geq 1$ and upon using Young's inequality, that is the fact that $ab \leq \epsilon a^p/p + b^{p'}/p'$ for $a,b \geq 0$, and choosing $a:=f_{0}$ and $b := \phi^{p-1}$, we conclude that we have
\begin{equation}
p(p-1)\intg |\gd \varphi|^{2}\varphi^{p-2}\, \zeta_{n}\,\<x\>^{k}dx + 
\intg \varphi^{p}\, \zeta_{n}\, \<x\>^{k}dx
\leq \intg |f_{0}|^p\, \zeta_{n}\,\<x\>^{k}dx ,
\end{equation}
It is clear now that letting $n$ tend to $\infty$, we deduce that $\varphi$, the 
solution of \eqref{eq:2.9}, belongs to $L^{p}_{k}$ and 
$$\|\phi\|_{L^p_{k}} \leq \|f_{0}\|_{L^p_{k}},$$
and that moreover
$$\intg |\gd \varphi|^{2}\varphi^{p-2}
\zeta_{n}\<x\>^{k}dx<\infty.$$ 

To finish the proof of the Lemma, when $f_{0} \geq 0$ belongs only to $L^p_{k}$ we 
consider a sequence $f_{0n}\in C^{\infty}_{c}$ such that $f_{0n} \geq 0$ and $f_{0n} \to f_{0}$ $L^{p}_{k}$ and we conclude by verifying easily that the corresponding solutions $\varphi_{n}$ converge to $\phi$ as $n\to\infty$. 

Indeed we have also $\|\phi\|_{L^p_{k}} \leq \|f_{0}\|_{L^p_{k}}$ and $L\phi \in L^p_{k}$, which means that the operator $L-\lambda I$ is $m$-dissipative on $L^p_{k}$. 
\end{proof}

Next we prove the following Nash type inequality which is going to be useful later.

\begin{Lem}\label{lem:Nash-k}
Let $f\in L^{1}_{k/2}(\R^{d})\cap 
H^{1}_{k}(\R^{d}),$ assume 
that $k>0$ when $d\geq2$ and $k\geq 2$ when 
$d=1.$ Then there exists a constant $C>0$ such 
that  the following inequality holds
\begin{equation}\label{eq:2.11}
\|f\|^{2+\frac{4}{d}}_{L^{2}_{k}}
\leq C \|f\|^{\frac{4}{d}}_{L^{1}
_{k/2}}\cdot \|\nabla f\|^{2}_{L^{2}_{k}}.
\end{equation}
\end{Lem}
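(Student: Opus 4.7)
\medskip

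\noindent\textbf{Proof proposal.} The plan is to reduce to the classical (unweighted) Nash inequality via the substitution $g := f\,\langle x\rangle^{k/2}$. With this choice one immediately has
$$\|g\|_{L^{2}}^{2} = \|f\|_{L^{2}_{k}}^{2}, \qquad \|g\|_{L^{1}} = \|f\|_{L^{1}_{k/2}},$$
so the target inequality will follow from the classical Nash inequality $\|g\|_{L^2}^{2+4/d} \leq C\|g\|_{L^1}^{4/d}\|\nabla g\|_{L^2}^2$ as soon as one can control $\|\nabla g\|_{L^{2}}^{2}$ by $\|\nabla f\|_{L^{2}_{k}}^{2}$.

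The central step is therefore to expand the gradient $\nabla g = \nabla f\,\langle x\rangle^{k/2} + f\,\nabla\langle x\rangle^{k/2}$, square and integrate, and handle the cross term by integration by parts. Using $\nabla\langle x\rangle^{k/2}\,\langle x\rangle^{k/2} = \tfrac12\nabla\langle x\rangle^{k}$, the cross term becomes $\tfrac12\int\nabla(f^{2})\cdot\nabla\langle x\rangle^{k}\,dx = -\tfrac12\int f^{2}\Delta\langle x\rangle^{k}\,dx$. Combining with the explicit formulas \eqref{eq:nabla-x}--\eqref{eq:laplacian-x} for $\nabla\langle x\rangle^{k}$ and $\Delta\langle x\rangle^{k}$, I expect the identity
$$\|\nabla g\|_{L^{2}}^{2} = \|\nabla f\|_{L^{2}_{k}}^{2} + \int_{\mathbb{R}^{d}} f^{2}\,\Big[-\frac{kd}{2} - \frac{k(k+2d-4)}{4}|x|^{2}\Big]\,\langle x\rangle^{k-4}\,dx.$$

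The main (and only delicate) point is then to observe that the bracket is nonpositive under exactly the stated assumption: when $d\geq 2$ and $k>0$ one has $k+2d-4 \geq k > 0$, while when $d=1$ the hypothesis $k\geq 2$ forces $k+2d-4 = k-2\geq 0$. In both cases the bracket is $\leq 0$, hence
$$\|\nabla g\|_{L^{2}}^{2}\;\leq\;\|\nabla f\|_{L^{2}_{k}}^{2}.$$
Since $g\in H^{1}(\mathbb{R}^{d})\cap L^{1}(\mathbb{R}^{d})$ under the hypotheses on $f$, applying the classical Nash inequality to $g$ and substituting back the identifications of the three norms yields \eqref{eq:2.11} with a constant depending only on $d$. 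The only nontrivial step I foresee is the algebraic sign check above; everything else is routine substitution and one integration by parts.
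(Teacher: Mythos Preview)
Your proposal is correct and follows essentially the same approach as the paper: substitute $g=f\langle x\rangle^{k/2}$, apply the classical Nash inequality, and use one integration by parts on the cross term to obtain $\|\nabla g\|_{L^2}^2 \le \|\nabla f\|_{L^2_k}^2$ via the same sign analysis of the coefficient $-\tfrac{kd}{2}-\tfrac{k(k+2d-4)}{4}|x|^2$. The paper's argument is identical up to notation.
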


%We have the following Nash type inequality: 
%\begin{Lem}
%Let $f\in L^{1}_{k/2}(\R^{d})\cap 
%H^{1}_{k}(\R^{d}),$ assume 
%that $k>0$ when $d\geq2$ and $k\geq 2$ when 
%$d=1.$ Then there exists a constant $C>0$ such 
%that  the following inequality holds
%\begin{equation}
%\|f\|^{2+\frac{4}{d}}_{L^{2}_{k}}
%\leq C \|f\|^{\frac{4}{d}}_{L^{1}
%_{k/2}}\cdot \|\nabla f\|^{2}_{L^{2}_{k}}.
%\end{equation}
%\end{Lem}

\begin{proof} Let $f\in  L^{1}_{k/2}(\R^{d})
\cap H^{1}_{k}(\mathbb{R}^{d}),$
we  write 
$$\int_{\mathbb{R}^{d}}|f(x)|^{2} \langle x 
\rangle^{k}dx= \int_{\mathbb{R}^{d}}|f(x)\, \langle 
x \rangle^{\frac{k}{2}}|^{2} dx.$$
Therefore
$$\|f\|^{2+\frac{4}{d}}_{L^{2}
_{k}}= \|f \,\langle \cdot \rangle^{\frac{k}{2}}\|^{
2+\frac{4}{d}}_{L^{2}}.$$
Let us set 
$\varphi(x):=f(x)\langle x 
\rangle^{\frac{k}{2}},$
then by the Nash's classical inequality 
(J. Nash \cite{JFN})  
we have
$$ \|\varphi\|^{2+\frac{4}{d}}_{L^{2}}
\leq C
\|\varphi\|^{\frac{4}{d}}_{L^{1}
}\|\nabla \varphi\|^{2} _{L^{2}},$$
since 
$f\,\langle \cdot\rangle^{\frac{k}{2}} \in H^{1}.$
With simple calculations we can  see that 
$$\nabla\varphi =
\langle x \rangle^{\frac{k}{2}}\,\nabla f + 
\frac{k}{2}f(x)\,\langle x 
\rangle^{\frac{k}{2}-2}\,x.$$
Thus we have 
$$\|\nabla \varphi\|^{2}_{L^{2}}=
\int_{\mathbb{R}^{d}}|\nabla f|^{2} \langle x 
\rangle^{k}dx+ \frac{k^{2}}{4}
\int_{\mathbb{R}^{d}}|f|^{2}|x|^{2} \langle x 
\rangle^{k-4}dx.$$
$$+\frac{k}{2}
\int_{\mathbb{R}^{d}}2f(x)\nabla f(x) \cdot x 
\langle x \rangle^{k-2}dx$$
Integrating by parts the third
integral on the right hand side 
above, we obtain
$$\|\nabla \varphi\|^{2}_{L^{2}}=
\int_{\mathbb{R}^{d}}|\nabla f|^{2} \langle x 
\rangle^{k}dx- \frac{k}{2}
\int_{\mathbb{R}^{d}}f^{2}{\rm div}(\langle x 
\rangle^{k-2}\, x)dx + \frac{k^{2}}{4}
\int_{\mathbb{R}^{d}}|f|^{2}|x|^{2} \langle x 
\rangle^{k-4}dx.$$
Since we have 
$${\rm div}(\langle x \rangle^{k-2} x)=
(d+(d+k-2)|x|^2)\langle x \rangle^{k-4}.$$
We get
$$\|\nabla \varphi\|^{2}_{L^{2}}=
\|\nabla f\|^{2}_{L^{2}_{k}}+
\int_{\mathbb{R}^{d}}|f|^{2}\left[ 
\frac{k^{2}}{4} |x|^{2}- \frac{k}{2}(d+(d
+k-2))
|x|^{2}\right] \langle x \rangle^{k-4}dx.$$
Note that $$\left[ \frac{k^{2}}{4} |x|^{2}- 
\frac{k}{2}(d+(d+k-2))
|x|^{2}\right]=-\frac{k}{2}\left[ d+(d+
\frac{k}{2}-2)
|x|^{2}\right].$$
Therefore when 
$d\geq 2$ we have  $$d+\frac{k}{2}-2> 0$$ 
and 
$$\|\gd \varphi\|^{2}_{L^{2}}\leq 
\|\gd f\|^{2}_{L^{2}_{k}}$$
Otherwise if $d=1,$  we assume that $k\geq 2$
and thus, in this case also 
$$\|\gd \varphi\|^{2}_{L^{2}}\leq 
\|\gd f\|^{2}_{L^{2}_{k}}$$
Replacing 
$\varphi$ by $f\,\langle x 
\rangle^{\frac{k}{2}},$ we obtain the  lemma.
\end{proof}
%%%%%%%%%%%%%%%%%%%%%%%%%%%%%%%%

%%%%%%%%%%%%%%%%%%%%%%%%%%%%%%%%

\section{Existence of a stationary solution}

In this section we are interested in the 
existence and uniqueness of a stationary 
solution. To find this solution we want to 
use the Krein-Rutmann's theorem revisited by
J. Scher and S. Mischler \cite{JM}. For this 
we need some notions of Banach lattices, 
which we are going to recall. Let us consider 
the $L^{2}_{k}$ space equipped 
with its natural partial order $\geq.$ 
We set 
\begin{equation}
(L^{2}_{k})_{+}:=\big \{ f\;\in L^{2}_{k}\; ;\;
f \geq 0\big \}
\end{equation} 
\begin{Lem}[Kato's inequality]
\label{thm:Kat}
For all $f\in D(L)$ we have 
\begin{equation} 
\label{eq:Kato}
L|f|\geq {\rm sgn}(f)Lf,
\end{equation}
in the sense of distributions.
\end{Lem}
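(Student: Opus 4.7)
The plan is to reduce the inequality for $L = \Delta + \operatorname{div}(\mathbf{E}\,\cdot\,)$ to the classical Kato inequality for the Laplacian, exploiting the fact that the drift part of $L$ produces an equality rather than an inequality.

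First, I would observe that if $f \in D(L)$ then $f \in H^1_k(\mathbb{R}^d)$. This follows from the coercivity estimate \eqref{eq:2.5} established in the proof of Proposition \ref{thm:Exist-SG}, which shows that the graph norm of $L$ controls the $L^2_k$-norm of $\nabla f$. By Stampacchia's lemma it then follows that $|f| \in H^1_{\mathrm{loc}}$ with $\nabla |f| = \mathrm{sgn}(f)\,\nabla f$ almost everywhere, where $\mathrm{sgn}(f) := \mathbf{1}_{\{f>0\}} - \mathbf{1}_{\{f<0\}}$. Using $\mathbf{E} \in W^{1,\infty}_{\mathrm{loc}}$, a direct pointwise computation gives
\begin{equation*}
\operatorname{div}(\mathbf{E}|f|) = |f|\operatorname{div}(\mathbf{E}) + \mathbf{E}\cdot\nabla |f| = |f|\operatorname{div}(\mathbf{E}) + \mathrm{sgn}(f)\,\mathbf{E}\cdot\nabla f,
\end{equation*}
while on the other side
\begin{equation*}
\mathrm{sgn}(f)\operatorname{div}(\mathbf{E} f) = \mathrm{sgn}(f)\bigl[f\operatorname{div}(\mathbf{E}) + \mathbf{E}\cdot\nabla f\bigr] = |f|\operatorname{div}(\mathbf{E}) + \mathrm{sgn}(f)\,\mathbf{E}\cdot\nabla f.
\end{equation*}
These two expressions coincide almost everywhere, so the drift contribution to $L|f| - \mathrm{sgn}(f)Lf$ vanishes. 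Therefore it suffices to establish the classical Kato inequality $\Delta |f| \geq \mathrm{sgn}(f)\,\Delta f$ in $\mathcal{D}'(\mathbb{R}^d)$.

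For this classical step I would use the usual regularization. Set $j_\epsilon(s) := \sqrt{s^2 + \epsilon^2} - \epsilon$, which is $C^2$, convex (so $j_\epsilon'' \geq 0$), and satisfies $j_\epsilon(s) \to |s|$ and $j_\epsilon'(s) \to \mathrm{sgn}(s)$ pointwise as $\epsilon \to 0$, with $|j_\epsilon'| \leq 1$. Let $f_n := \rho_n * f$ be a standard mollification; note that $\Delta f = Lf - \operatorname{div}(\mathbf{E}f) \in L^2_{\mathrm{loc}}$ (using $Lf \in L^2_k$, $\mathbf{E} \in W^{1,\infty}_{\mathrm{loc}}$ and $f \in H^1_{\mathrm{loc}}$), so $f_n$ is smooth with $\Delta f_n \to \Delta f$ in $L^2_{\mathrm{loc}}$ and $\nabla f_n \to \nabla f$ in $L^2_{\mathrm{loc}}$. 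The pointwise chain rule then gives
\begin{equation*}
\Delta j_\epsilon(f_n) = j_\epsilon''(f_n)\,|\nabla f_n|^2 + j_\epsilon'(f_n)\,\Delta f_n \geq j_\epsilon'(f_n)\,\Delta f_n.
\end{equation*}
Testing against a nonnegative $\varphi \in C^\infty_c(\mathbb{R}^d)$ and first letting $n \to \infty$ (using the $L^2_{\mathrm{loc}}$ convergence of $\Delta f_n$, together with dominated convergence and $|j_\epsilon'| \leq 1$), then letting $\epsilon \to 0$ (using $j_\epsilon(f) \to |f|$ and $j_\epsilon'(f) \to \mathrm{sgn}(f)$ boundedly), yields $\Delta|f| \geq \mathrm{sgn}(f)\,\Delta f$ in $\mathcal{D}'$. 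Adding the drift equality obtained above gives the claim.

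The main obstacle is a technical one: confirming that $f \in D(L)$ does provide enough regularity to make the drift computation rigorous and to justify the two-step passage to the limit. Once one has $f \in H^1_{\mathrm{loc}}$ and $\Delta f \in L^2_{\mathrm{loc}}$, the convergences above are routine dominated-convergence arguments; the delicate point is merely ensuring that $j_\epsilon'(f_n)\Delta f_n \rightharpoonup \mathrm{sgn}(f)\Delta f$ after integrating against $\varphi$, which follows from strong $L^2_{\mathrm{loc}}$ convergence of $\Delta f_n$ and bounded a.e. convergence of $j_\epsilon'(f_n)$.
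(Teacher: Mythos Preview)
Your proof is correct and rests on the same core device as the paper's: the smooth convex approximation $j_\epsilon(s)=\sqrt{s^{2}+\epsilon^{2}}-\epsilon$ of $|s|$, whose convexity $j_\epsilon''\geq 0$ produces the inequality for the second-order part. The paper applies $j_\epsilon$ directly to the full operator, expanding $Lj_\epsilon(f)=\Delta j_\epsilon(f)+\operatorname{div}(j_\epsilon(f)\mathbf{E})$ and passing to the limit $\epsilon\to 0$ in each term simultaneously; you instead first peel off the drift as an exact identity $\operatorname{div}(\mathbf{E}|f|)=\operatorname{sgn}(f)\operatorname{div}(\mathbf{E}f)$ via Stampacchia, reducing everything to the classical Kato inequality for~$\Delta$. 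Your additional mollification layer $f_n=\rho_n\ast f$ makes the pointwise chain rule $\Delta j_\epsilon(f_n)=j_\epsilon''(f_n)|\nabla f_n|^{2}+j_\epsilon'(f_n)\Delta f_n$ unambiguous, whereas the paper writes this formula directly for $f$, implicitly relying on $f\in H^{2}_{\mathrm{loc}}$ (which does follow from $\Delta f\in L^{2}_{\mathrm{loc}}$ by interior elliptic regularity). Both arguments are standard; yours is a touch more modular and self-contained, the paper's marginally shorter.
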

For more details on kato's inequality,  one can refer to W. Arendt \cite{WA},  T. Kato \cite{TK} R.Nagel and H. Ulig \cite{NU} and B. Simon\cite{BS}. For more convenience we give the following proof.
\medskip
 
\begin{proof}
Let $f\in D(L),$ by definition of $D(L)$ we 
have $f\in L^{2}_k$ and $Lf\in L^{2}_k.$
This implies in particular that $f\in H^1_{k}$, that is $\gd f\in L^{2}_{k}$
and we have also $\Delta f\in L^{2}_{{\rm loc}}$. Now
consider the function 
$$j_{\varepsilon}(s):=(\varepsilon^{2} + s^{2})^{1/2}-\varepsilon$$
 for $s\in \R.$
A simple calculation gives  
$$j'_{\varepsilon}(s)=\frac{s}
{\sqrt{\varepsilon^{2}+s^{2}}} \qquad 
\mbox{and}\qquad j''_{\varepsilon}(s)=
\frac{\varepsilon^{2}}{(\varepsilon^{2}+
s^{2})^{3/2}}.$$
Note that $j''_{\varepsilon}(s)\geq 0.$
For $f\in D(L)$  
we compute 
$\nabla j_{\varepsilon}(f)$ and 
$\Delta j_{\varepsilon}(f)$
that is
\begin{equation}
\nabla j_{\varepsilon}(f)= j'_{\varepsilon}(f)
\gd f
\end{equation}
and 
\begin{equation}
\Delta j_{\varepsilon}(f)=
j''_{\varepsilon}(f)|\gd f|^{2} + 
j'_{\varepsilon}(f)\Delta f.
\end{equation} 
Then for all $\varphi \in C^{\infty}_{c}$ such that $\varphi \geq 0$, since $j''(f)\geq 0$ we have 
\begin{equation}
\<\Delta j_{\varepsilon}(f), \varphi\>\geq
\<j'_{\varepsilon}(f)\Delta f,\varphi\>.
\end{equation}
Since $j_{\varepsilon}(f)\rightarrow |f|$ 
in $L^{2}_{k}$ and 
$\Delta \varphi \in L^{\infty}\cap L^{1},$ we 
have in particular
$$\Delta j_{\varepsilon}(f)\rightarrow
\Delta |f| \quad \mbox{in} \quad
\mathscr{D}'(\R^{d}).$$
Therefore $$\<\Delta j_{\varepsilon}(f),
\varphi\>\rightarrow \<\Delta |f|, \varphi\>.$$
On the other hand using the definition of $L$ we obtain
the following inequality
$$\<Lj_{\varepsilon}(f), \varphi\> \geq 
\<j'_{\varepsilon}(f)\Delta f + {\rm div}
(j_{\varepsilon}(f)\E), \varphi\>.$$ 
The right hand side term of the above 
inequality can be rewritten as follows:
$$\<j'_{\varepsilon}(f)\Delta f + {\rm div}
(j_{\varepsilon}(f\E)), \varphi\>=
\<j'_{\varepsilon}(f)\Delta f 
+j'_{\varepsilon}(f)\gd f\cdot \E + 
j_{\varepsilon}(f){\rm div}(\E), \varphi\>.$$
Due to the fact that $j'_{\varepsilon}(f) \rightarrow {\rm sgn}(f)$ a.e.\ on $\R^{d},$ one sees that
$$\<j'_{\varepsilon}(f)\Delta f,\varphi\>=
\intg j'_{\epsilon}(f)
\Delta f\varphi
dx\rightarrow \intg {\rm sgn}(f) \Delta f
\varphi dx.$$
On the other hand, since $\E$ and ${\rm div}(\E)$ belong to
$L^{\infty}_{\rm loc}(\R^{d}),$ and using the fact that $\gd f\cdot\E\in 
L^{2}_{{\rm loc}}$, as $\epsilon\to 0,$ we have 
$$\<j'_{\varepsilon}(f)\gd f\cdot \E, \varphi\>
=\intg j'_{\varepsilon}(f)\gd f\cdot\E dx
\rightarrow
\intg {\rm sgn}(f)\gd f\cdot \E\varphi dx.$$
So finally we infer that
$$\<j'_{\varepsilon}(f)\Delta f + j'_{\varepsilon}(f)\gd f\cdot \E, \varphi\>
\to \<{\rm sgn}(f)\Delta f 
+{\rm sgn}(f)\gd f\cdot \E , \varphi\>,$$
when $\varepsilon$ goes to $0$.
Analogously we have
$$\<j_{\varepsilon}(f){\rm div}(\E), 
\varphi\>=\intg j_{\varepsilon}(f) 
{\rm div}(\E)\varphi dx \rightarrow
\intg |f|{\rm div}(\E)\varphi dx,$$
when $\varepsilon$ goes to $0,$
since $|j_{\varepsilon}(f)|\leq |f|\in 
L^{2}_{{\rm loc}}.$
Now we remind that
$|s|={\rm sgn}(s)\cdot s.$ Therefore we have
$$\<|f|{\rm div}(\E) , \varphi\>=
\<{\rm sgn}(f)\;f{\rm div}(\E) , 
\varphi\>,$$  
and thus we obtain
$$\<L|f|,\varphi\>=\lim_{\varepsilon
\rightarrow 0}\<L j_{\varepsilon}(f),
\varphi\> \geq
\<{\rm sgn}(f)\Delta f+\gd |f|\cdot\E+|f|{\rm div}(\E), \varphi\>.$$
However one may check that
\begin{align*}
\gd |f|\cdot\E+|f|
{\rm div}(\E) & =
{\rm sgn}(f)\gd f\cdot \E 
+ {\rm sgn}(f)f {\rm div}(\E)  \\
&=  {\rm sgn}(f)\big[\gd f\cdot \E + f{\rm div}(\E)\big ] ,
\end{align*}
from which we conclude that
$${\rm sgn}(f)\Delta f+\gd |f|\cdot\E+|f| {\rm div}(\E) = {\rm sgn}(f)Lf,$$
and finally  that for all $\phi\in \mathscr{D}({\Bbb R}^d)$ such that $\phi \geq 0$ we have
$$\langle L|f|,\phi \rangle \geq \langle {\rm sgn}(f)Lf,\phi \rangle,$$
which is precisely the Kato's inequality \eqref{eq:Kato}. 
\end{proof}
The Kato's inequality will stay true if we 
replace $|f|$ by the positive part of $f$, that is $f^{+} := (|f|+f)/2.$ In this case we have
$$L f^{+}\geq {(1+{\rm sgn}(f)) \over 2}\,L f ={\mathbf{1}}_{\{f>0\}}\cdot Lf.$$

\begin{remark}
It is well known that if $L$ satisfies 
Kato's inequality then this is 
equivalent to say that  the semigroup 
which is generated by $L$ is a positivity preserving semigroup, 
in the sense that if $f_{0}\geq 0$ 
then $S_{L}(t)f_{0}\geq 0$. See for instance
B. Simon \cite{BS}, R. Nagel 
 and H. Uhlig  
 \cite{NU} [theorem 4.1 
 page 121])  
  or W. Arendt (\cite{WA}[ 
 theorem 1.6 page 159]). 

\end{remark}

\begin{remark}
 The Kato's inequality implies also the weak 
 maximum principle: in other words, if 
  $f\in D(L)$ and 
 $Lf\leq 0,$ then $f\geq 0.$ 
\end{remark}

\begin{Lem}[Strong maximum principle]
\label{thm:Str-MP}
Let $f\in D(L)\cap W^{2,\infty}(\R^{d})$, 
then the linear operator 
$L$ satisfies a strong maximum 
priciple. i.e
\begin{equation} 
\big (f\geq 0, \quad f\not\equiv 0\quad \mbox{and} \; \; Lf
\leq 0\big )\qquad \Rightarrow\qquad f>0.
\end{equation} 
\end{Lem}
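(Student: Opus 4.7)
The plan is to argue by contradiction using a Hopf-type boundary-point argument. Assume $f(x_0) = 0$ at some $x_0 \in \R^d$; since $f \in W^{2,\infty}(\R^d) \subset C^1(\R^d)$, $f \geq 0$, and $f \not\equiv 0$, the open set $\Omega := \{f > 0\}$ is nonempty and a proper subset of $\R^d$. I would pick $x_1 \in \Omega$ close enough to $\partial\Omega$ so that $R := \mathrm{dist}(x_1,\Omega^c) > 0$ is as small as I need, and select $z \in \partial B(x_1,R) \cap \Omega^c$, which forces $B(x_1,R) \subset \Omega$ and $f(z) = 0$.

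On the annulus $A := \{R/2 < |x-x_1| < R\}$ I introduce the Hopf barrier
\[
h(x) := e^{-\alpha|x-x_1|^2} - e^{-\alpha R^2},
\]
which is positive in $A$ and vanishes on $\partial B(x_1,R)$. Expanding $Lh = \Delta h + \E\cdot\nabla h + \mathrm{div}(\E)\,h$ and using (\textbf{H0}) to bound $\E$ and $\mathrm{div}(\E)$ in $L^\infty(\bar A)$, one checks that the dominant term $4\alpha^2|x-x_1|^2 e^{-\alpha|x-x_1|^2}$ forces $Lh \geq c_0 > 0$ on $A$ once $\alpha$ is large enough. I then consider $w := f - \epsilon h$: on $\{|x-x_1| = R/2\} \subset \Omega$, compactness and $f > 0$ there give $f \geq m > 0$, so $w \geq 0$ provided $\epsilon < m/\sup_A h$; on $\{|x-x_1| = R\}$, $h \equiv 0$, so $w = f \geq 0$; and $Lw \leq -\epsilon c_0 < 0$ inside $A$.

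The weak maximum principle for $L$ on the bounded annulus $A$ then yields $w \geq 0$, i.e.\ $f \geq \epsilon h$ on $A$. Since $f(z) = h(z) = 0$, differentiating along the outward normal $\nu := (z-x_1)/R$ at $z$ gives
\[
\partial_\nu f(z) \leq \epsilon\,\partial_\nu h(z) = -2\alpha\epsilon R\, e^{-\alpha R^2} < 0,
\]
which contradicts the fact that $z$ is a global minimum of $f \in C^1(\R^d)$ (forcing $\nabla f(z) = 0$). Hence $f > 0$ on $\R^d$.

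The only non-routine step is the weak maximum principle for $L$ on the bounded set $A$, because the zero-order coefficient $\mathrm{div}(\E)$ has no a priori sign. I would handle it by an energy estimate in the spirit of \eqref{eq:Coercive-1}: multiplying $-Lw \geq 0$ by $w^-$, which is compactly supported in $A$ thanks to $w \geq 0$ on $\partial A$, and integrating by parts, the lower-order terms combine into $\tfrac12\int \mathrm{div}(\E)(w^-)^2\,dx$, leading to
\[
\int_A |\nabla w^-|^2\,dx \leq \tfrac{1}{2}\|\mathrm{div}(\E)\|_{L^\infty(\bar A)}\int_A |w^-|^2\,dx.
\]
Since $x_1$ may be chosen so that $R$ is arbitrarily small, the Poincar\'e inequality on $A$ (with constant of order $R^2$) makes the right-hand side strictly smaller than the left, forcing $w^- \equiv 0$.
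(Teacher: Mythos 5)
Your argument is correct, and it amounts to a self-contained proof of the result that the paper simply cites: the paper's own proof observes that $f\geq 0$ (weak maximum principle via Kato's inequality) and then invokes the Hopf maximum principle from Protter--Weinberger on the balls $\overline{B}(0,R)$ to conclude $f\equiv 0$ as soon as $f$ vanishes somewhere, contradicting $f\not\equiv 0$. You instead reprove the Hopf boundary-point lemma from scratch, and the pieces all check out: the barrier $h(x)=e^{-\alpha|x-x_1|^2}-e^{-\alpha R^2}$ on the annulus with $\alpha$ large (using (\textbf{H0}) for local $L^\infty$ bounds on $\E$ and $\div(\E)$), the comparison $f\geq \epsilon h$, and the contradiction $\partial_\nu f(z)<0$ versus $\nabla f(z)=0$ at the interior global minimum $z$, where $f\in W^{2,\infty}\subset C^{1}$ is used. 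The genuinely valuable extra content in your version is the last step: since the zero-order coefficient $\div(\E)$ has no sign, the textbook weak maximum principle does not apply verbatim on the annulus, and you supply the missing comparison principle by the energy estimate $\int_A|\nabla w^-|^2\leq \tfrac12\|\div(\E)\|_{L^\infty(\overline A)}\int_A|w^-|^2$ together with a Poincar\'e inequality with constant $O(R^2)$ for $w^-\in H^1_0(A)$, the smallness of $R$ being available because $x_1$ can be taken near a fixed point of $\partial\{f>0\}$ so that the annulus stays in a fixed compact set. (The paper's one-line citation actually needs a word at exactly this point --- the standard reduction replaces $\div(\E)$ by $-(\div\E)^-$ using $f\geq 0$ --- and glosses over it.) The price of your route is length; what it buys is that no black-box maximum principle is invoked and the precise role of (\textbf{H0}) is made explicit.
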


\begin{proof}
Let $f\in D(L)$ be such that $f \not\equiv 0$ and $Lf \leq 0$ on ${\Bbb R}^d$. By the weak maximum principle, which is a consequence of Kato's inequality Lemma \ref{thm:Kat}, we know that $f \geq 0$, and actually for any $R > 0$ we have 
$$M(R) := \inf_{|x| \leq R} f(x) \geq 0.$$
If there exists $R_{0} > 0$ and $x_{0} \in B(0,R_{0})$ such that $f(x_{0}) = 0$, then $M(R_{0}) = 0$, and consequently for any $R > R_{0}$ we have also $M(R) = 0$. Therefore, according to Hopf maximum principle (see for instance Theorem 5, chapter 2, section 3 of the classical book of M.H.~Protter \& H.F.~Weinberger \cite{Protter-Weinberger}) we have $f \equiv 0$ in $\overline{B}(0,R)$, for all $R$, and this contradicts the fact that $f\not \equiv 0$ on ${\Bbb R}^d$.
\end{proof}

\begin{Lem}[Mass conservation for the semigroup]
\label{thm:M-Cons}
Let $(L, D(L))$ be defined by \eqref{eq:1.6}. Then we have the following identity.
\begin{equation}
L^{\ast}1=0, \qquad \mbox{in} 
\quad \mathscr{D}(\R^{d}),
\end{equation}
where $L^{\ast}$ is the formal adjoint:
$$L^{\ast}\varphi=\Delta\varphi-\E\cdot 
\gd\varphi, \quad \forall \; \varphi\; \in 
C^{\infty}_{c}(\R^{d}).$$
\end{Lem}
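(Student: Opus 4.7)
The plan is to interpret the identity $L^{\ast} 1 = 0$ in the distributional sense and to exploit the fact that $L$ is already written in divergence form. By duality, the statement $L^{\ast} 1 = 0$ in $\mathscr{D}'(\R^{d})$ means precisely that for every test function $\varphi \in C^{\infty}_{c}(\R^{d})$,
$$\langle L^{\ast} 1, \varphi \rangle \;=\; \langle 1, L\varphi \rangle \;=\; \int_{\R^{d}} L\varphi(x)\,dx \;=\; 0,$$
where we have used that the constant function $1$, regarded as a distribution, acts by integration against test functions.

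The verification of $\int_{\R^{d}} L\varphi(x)\,dx = 0$ is then immediate from the divergence structure of $L$, namely
$$L\varphi \;=\; {\rm div}\bigl(\nabla \varphi + \E\,\varphi\bigr).$$
Under hypothesis (\textbf{H0}), the field $\E$ lies in $W^{1,\infty}_{\rm loc}(\R^{d};\R^{d})$, so whenever $\varphi \in C^{\infty}_{c}(\R^{d})$ the vector field $\nabla\varphi + \E\,\varphi$ is Lipschitz continuous and compactly supported. Applying the divergence theorem on any ball large enough to contain the support of $\varphi$ then yields
$$\int_{\R^{d}} {\rm div}\bigl(\nabla \varphi + \E\,\varphi\bigr)\,dx \;=\; 0.$$

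Combining these two steps gives $\langle L^{\ast} 1, \varphi \rangle = 0$ for every $\varphi \in C^{\infty}_{c}(\R^{d})$, which is the claimed identity. This is essentially a one-line computation once the divergence form is recognised, so the argument presents no genuine obstacle; the only delicate point is ensuring that the boundary integral in the divergence theorem vanishes, and this is guaranteed by the compact support of $\varphi$ combined with the local $W^{1,\infty}$ regularity of $\E$ provided by (\textbf{H0}). The terminology \emph{mass conservation} is justified by the immediate corollary that for any sufficiently regular solution $f(t) = S_{L}(t)f_{0}$ of \eqref{eq:1.1},
$$\frac{d}{dt}\int_{\R^{d}} f(t,x)\,dx \;=\; \int_{\R^{d}} Lf(t,x)\,dx \;=\; \langle L^{\ast} 1, f(t)\rangle \;=\; 0,$$
so that the total mass is preserved in time.
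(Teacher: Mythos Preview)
Your argument is correct. The paper itself provides no proof at all for this lemma: it is stated and immediately followed by a \qed\ symbol, so the authors regard it as self-evident. Your distributional verification via the divergence form of $L$ is the natural way to justify it, and in fact one could also simply observe that the formula $L^{\ast}\varphi = \Delta\varphi - \E\cdot\nabla\varphi$ applied pointwise to $\varphi \equiv 1$ gives $0$ since both $\Delta 1$ and $\nabla 1$ vanish; your duality argument is slightly more careful in that it makes precise the sense in which $L^{\ast}1$ is to be understood.
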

\qed

\begin{prop}
\label{thm:adj}
Let $(L,D(L))$ be defined by \eqref{eq:1.6} and. Assume that hypothesis (\textbf{H1})
holds. Then there exists $b\in \R$ and a 
function $\psi\in D(L^{\ast}),$  
$\psi>0,$ such that
\begin{equation}\label{eq:3.9}
L^{\ast}\psi\geq   b \psi,\quad \mbox{in}\quad
\mathscr{D}'(\R^{d}).
\end{equation} 
\end{prop}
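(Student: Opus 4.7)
The plan is to exhibit $\psi$ as a polynomial weight and verify the sub-eigenvector inequality by a direct computation. I would take $\psi(x) := \<x\>^{m}$ for a fixed real number $m$ (the specific value is largely irrelevant; any $m$ that places $\psi$ in the realization of $D(L^{\ast})$ under consideration will do). Using the formulas \eqref{eq:nabla-x} and \eqref{eq:laplacian-x} for $\gd\<x\>^{m}$ and $\Delta\<x\>^{m}$, a direct computation gives
\begin{equation*}
\frac{L^{\ast}\psi(x)}{\psi(x)} \;=\; \frac{md + m(m+d-2)|x|^{2}}{(1+|x|^{2})^{2}} \;-\; m\,\frac{x\cdot\E(x)}{1+|x|^{2}}.
\end{equation*}

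The task is then to bound this quotient below by some $b\in\R$. The first summand is continuous on $\R^{d}$ and decays like $|x|^{-2}$ at infinity, so it is uniformly bounded. For the second summand I would invoke hypothesis (\textbf{H1}): if $m>0$, apply the upper bound $x\cdot\E \leq \alpha_{2}|x|^{\gamma_{2}}+\beta_{2}$; if $m<0$, use instead the lower bound $x\cdot\E \geq \alpha|x|^{\gamma}-\beta$. In either case, the crucial feature is the constraint $\gamma \leq \gamma_{2}\leq 2$, which guarantees that $|x|^{\gamma_{2}}/(1+|x|^{2})$, respectively $|x|^{\gamma}/(1+|x|^{2})$, is bounded on $\R^{d}$, and therefore that $-m\,x\cdot\E(x)/(1+|x|^{2})$ is bounded below by a finite constant. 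Setting $b := \inf_{x\in\R^{d}} L^{\ast}\psi(x)/\psi(x) > -\infty$, we obtain the pointwise inequality $L^{\ast}\psi \geq b\psi$, which a fortiori holds in $\mathscr{D}'(\R^{d})$; positivity $\psi>0$ is immediate.

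It remains to check that $\psi \in D(L^{\ast})$. Since $\psi\in C^{\infty}(\R^{d})$ and by the computation above $L^{\ast}\psi = F\psi$ with $F$ bounded, the image $L^{\ast}\psi$ inherits the weighted integrability of $\psi$ itself; in particular, choosing $m$ sufficiently negative (for instance $m<-(k+d)/2$) places both $\psi$ and $L^{\ast}\psi$ in $L^{2}_{k}$, so $\psi$ lies in the natural $L^{2}_{k}$ realization of $D(L^{\ast})$.

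The main obstacle, in my view, is purely bookkeeping: correctly matching the sign of $m$ to the appropriate half of (\textbf{H1}). Conceptually the entire argument rests on $\gamma_{2}\leq 2$, which is precisely what allows the polynomial ansatz $\psi=\<x\>^{m}$ to absorb the $1+|x|^{2}$ denominator in the term $x\cdot\E/(1+|x|^{2})$. Had $\gamma_{2}>2$ been admissible, the polynomial weight would fail and one would instead have to use an exponential weight $\psi=e^{\varphi}$ with $\varphi$ tuned to the growth of $\E$, rendering the Lyapunov computation considerably more delicate.
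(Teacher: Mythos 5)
Your proof is correct and follows essentially the same route as the paper: there one takes $\psi=\langle x\rangle^{-\alpha_{0}}$ with $\alpha_{0}>0$ large (your case $m<0$, needed anyway for $\psi\in L^{2}_{k}$), computes the same quotient $L^{\ast}\psi/\psi$ via \eqref{eq:nabla-x}--\eqref{eq:laplacian-x}, bounds it below using the lower half of (\textbf{H1}) with $\gamma\leq 2$, and sets $b$ equal to the infimum. The only cosmetic difference is your parallel discussion of the sign of $m$ before settling on the negative choice.
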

\medskip 

\begin{proof}
Let $\alpha_{0} > 0$ and consider the function
$$\psi(x) := \langle x \rangle^{-\alpha_{0}} = (1 + |x|^2)^{-\alpha_{0}/2}.$$
One checks that if $\alpha_{0}$ is large enough, then $\psi \in D(L^*)$. 
Using the expressions \eqref{eq:nabla-x} and \eqref{eq:laplacian-x}, where $k$ is replaced with $-\alpha_{0}$, one checks that
$${L^*\psi \over \psi} = {\alpha_{0}(\alpha_{0} + 2 -d)|x|^{2} - \alpha_{0}d \over (1 + |x|^2)^2 } + {\alpha_{0}\, x\cdot {\bf E} \over 1 + |x|^2}.$$
Now using the fact that according to \eqref{eq:1.3} the function $x \mapsto x\cdot {\bf E}$ has a growth more than $\alpha|x|^{\gamma} + \beta$ with $\gamma\leq 2$ it is clear that
$${L^*\psi \over \psi} \geq  b := \inf_{x\in {\Bbb R}^d} 
\left[{\alpha_{0}(\alpha_{0} + 2 -d)|x|^{2} - \alpha_{0} d \over (1 + |x|^2)^2 } + {\alpha_{0} (\alpha |x|^{\gamma} + \beta) \over 1 + |x|^2 }\right] > -\infty. $$
Thus we have proved the Proposition.
\end{proof}

\medskip

 We will set 
 \begin{align*}
 \omega(L) &:=\inf\big \{ b\in\R \;,\;
 L- bI\quad \mbox{is $m$-dissipative}\big \},\\
 s(L) &:=\sup \big \{ Re (z),z\in 
 \Sigma(L)\big \}
 \end{align*}
 and $\Sigma(L)$ is the spectrum of $L.$
\medskip

Now we will recall a result of  
S. Mischler and J. Scher
\cite[Theorem 4.3, page 39]{JM} which reads :

\medskip

\begin{theo} 
\label{thm:SM-JC}
We consider an operator $L,$ wich is a generator of a semigroup $S_{L}(t)$ on a Banach lattice of functions $\mathbf{X}$ and we assume that:

\medskip

\noindent{\bf (1)} 
we have $L=A+B,$
where $B$ is a bounded linear operator and $A$ is 
such that there exists $\tau\in \R,$ 
such that $A-\tau I$
is $m$-dissipative.

\medskip
\noindent{\bf (2)} 
There exist $b\in \R$ and 
$\psi\in D(L^{\ast})\cap\mathbf{X}_{+}\backslash \{0\},$ such that we have $$L^{\ast}\psi\geq  b\psi.$$

\medskip
\noindent{\bf (3)} 
$S_{L}$ is a positivity preserving semigroup.

\medskip
\noindent{\bf (4)}
$L$ satisfies the strong maximum principle.\\
\medskip

Then we have $s(L)=\omega(L),$ and denoting this common value by
$\lambda,$ there exists $G\in D(L)$ such that 
$$G>0,\qquad LG=\lambda G\qquad\mbox{with}\quad\lambda:=s(L).$$

\end{theo}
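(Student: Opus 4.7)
The plan is to follow the classical Krein-Rutman strategy, but in the Banach lattice setting where no single compactness hypothesis on the resolvent is given directly; instead the splitting $L=A+B$ must be exploited to localize the spectrum. I would organize the proof in three main steps, preceded by spectral preliminaries.

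First, I would use hypothesis (1) together with the Duhamel identity \eqref{eq:Rel-Conv} to control the essential spectrum of $L$. Since $A-\tau I$ is $m$-dissipative, $S_A(t)$ satisfies $\|S_A(t)\|\leq e^{\tau t}$, so its resolvent $(\lambda-A)^{-1}$ is bounded on $\{\Re\lambda>\tau\}$. Iterating \eqref{eq:Rel-Conv} gives, for every integer $N\geq 1$,
\begin{equation*}
S_L(t)=\sum_{n=0}^{N-1}\bigl(S_A\ast (BS_A)^{\ast n}\bigr)(t)+\bigl((S_A B)^{\ast N}\ast S_L\bigr)(t),
\end{equation*}
and, by taking Laplace transforms, a corresponding expansion for $R_L(\lambda)$ in powers of $BR_A(\lambda)$. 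The key point is that $B$ should, through the choice made for the splitting, be compact-like relative to $A$ (for instance, factoring through a compactly embedded weighted space obtained from the $H^1_k$-coercivity of $-A$ analogous to \eqref{eq:Coercive-H3}), so that $(BR_A(\lambda))^N$ becomes compact for some $N$. This forces the spectrum of $L$ in the half-plane $\{\Re\lambda>\tau\}$ to consist only of isolated eigenvalues of finite algebraic multiplicity, and in particular the essential growth bound of $S_L(t)$ is bounded by $\tau$.

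Second, I would turn hypothesis (2) into a lower bound $\omega(L)\geq b$. For any nonnegative $f_0\in D(L)$ with $\langle f_0,\psi\rangle>0$, differentiating gives
\begin{equation*}
\frac{d}{dt}\langle S_L(t)f_0,\psi\rangle=\langle S_L(t)f_0,L^{*}\psi\rangle\geq b\,\langle S_L(t)f_0,\psi\rangle,
\end{equation*}
where positivity of $S_L(t)f_0$ from (3) is used. Gronwall's lemma then yields $\langle S_L(t)f_0,\psi\rangle\geq e^{bt}\langle f_0,\psi\rangle$, hence $\|S_L(t)\|\geq c\,e^{bt}$. Combined with the first step, this places $s(L)$ in the discrete spectrum (assuming $b>\tau$, which is the relevant case and the one I would verify for the concrete $A,B,\psi$ at hand; otherwise one replaces $L$ by $L-cI$ to reduce to this situation).

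Third, I would build the eigenfunction. Let $\lambda_1=s(L)$ and let $\Pi$ denote the spectral projector onto the (finite-dimensional) generalized eigenspace of $L$ at $\lambda_1$. The range $\Pi(\mathbf X)$ is finite dimensional and $L$ acts on it. Because $S_L(t)=e^{\lambda_1 t}$ on this range up to nilpotent part, and $S_L(t)$ is positivity preserving, the cone $\mathbf X_+\cap\Pi(\mathbf X)$ is nontrivial (taking $\Pi f_0$ for a suitable $f_0\geq 0$ whose growth rate saturates $\lambda_1$). A finite-dimensional Perron-Frobenius argument on $\Pi(\mathbf X)$ then produces $G\in\Pi(\mathbf X)\cap\mathbf X_+$, $G\neq 0$, with $LG=\lambda_1 G$. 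Finally the strong maximum principle (4) upgrades $G\geq 0$ to $G>0$ strictly, and the very existence of this positive eigenvector forces $s(L)=\omega(L)$ (any exponential growth rate larger than $\lambda_1$ would contradict the first-step bound on the essential spectrum, while the positive eigenvector already realizes growth $e^{\lambda_1 t}$).

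The main obstacle I expect is the first step: extracting, from the abstract assumption that $B$ is merely \emph{bounded}, the discreteness of the spectrum of $L$ in $\{\Re\lambda>\tau\}$. In an abstract Banach-lattice statement one typically needs a hidden quantitative improvement—either $B$ is regularizing into a compactly embedded subspace, or the iterated kernels $(BR_A)^N$ are smoothing for large $N$. Pinning down this compactness via the concrete form of $A$ (divergence-form elliptic with weighted coercivity) and $B$ (localized, lower-order) is the delicate ingredient, and everything downstream—isolation of $\lambda_1$, finite dimensionality of $\Pi(\mathbf X)$, applicability of Perron-Frobenius—rests on it.
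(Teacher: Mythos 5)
The paper does not actually prove this statement: Theorem \ref{thm:SM-JC} is quoted verbatim from Mischler--Scher \cite{JM} (Theorem 4.3 there) and used as a black box, so there is no internal proof to compare yours against. Judged on its own, your proposal follows the right overall strategy (control the essential spectrum via the splitting and iterated Duhamel formula, use the dual supersolution $\psi$ and positivity to force spectrum in $\{\Re z\geq b\}$, extract a positive eigenvector by a Perron--Frobenius argument on the dominant spectral subspace, and upgrade to strict positivity via the strong maximum principle). But the obstacle you flag at the end is a genuine, fatal gap for the theorem \emph{as stated}: hypothesis (1) with $B$ merely bounded carries essentially no information (one may take $B=0$, $A=L$), so no discreteness of $\Sigma(L)$ in any half-plane, no finite-dimensional spectral projector $\Pi$, and no isolated $\lambda_1=s(L)$ can be derived from it. In \cite{JM} this is supplied by additional hypotheses that the paper's transcription drops, namely that $\tau<b$ and that some iterated convolution $(BS_A)^{*N}$ (equivalently $(BR_A(z))^N$) maps into a space compactly embedded in $\mathbf{X}$, together with an irreducibility-type condition; without these the conclusion is simply false in general, and your proof cannot be completed from hypotheses (1)--(4) alone.

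Two smaller points in the later steps. In Step 2, the inequality $\|S_L(t)\|\geq c\,e^{bt}$ bounds the growth bound $\omega(L)$ from below, not $s(L)$ directly; you recover $s(L)\geq b$ only after Step 1 has shown that the essential growth bound is $\leq\tau<b$, so the chain of implications must be ordered accordingly. In Step 3, the nontriviality of $\mathbf{X}_+\cap\Pi(\mathbf{X})$ is cleaner to obtain from the positivity of the resolvent: $R_L(\lambda)=\int_0^\infty e^{-\lambda t}S_L(t)\,dt\geq 0$ for $\lambda>s(L)$, hence the leading Laurent coefficient of $R_L$ at $\lambda_1=s(L)$ is a nonzero positive operator whose range meets $\mathbf{X}_+\setminus\{0\}$; the phrase ``taking $\Pi f_0$ for a suitable $f_0\geq 0$ whose growth rate saturates $\lambda_1$'' does not by itself guarantee $\Pi f_0\geq 0$ or $\Pi f_0\neq 0$.
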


\medskip
 The originality of this theorem is the fact that,
 it establishes a spectral theory result like Krein-Rutmann's in a non compact framework. It allows us to circumvent the lack of compactness 
of the linear operator. It keeps the philosophy of 
Krein-Rutman theorem while weakening itsassumptions.

\medskip

\begin{proof}(of Theorem \ref{thm:GS})\\
Let $(L,D(L))$ be defined by \eqref{eq:1.6}. From Theorem \ref{thm:Exist-SG} applied to the operator $L$ we 
have the existence of a real number $\lambda,$ such 
that $L-\lambda I$ be $m$-dissipative, and generates 
a semigroup on the space $\textbf{X}:=L^{2}_{k}.$ 
We are going to verify that the conditions of
Theorem \ref{thm:SM-JC} are satisfied.

\medskip
\noindent{(1)} We know that the operator $L$ can be split in the following way: for all 
$f\in D(L),$
$$Lf:= Bf + Af,$$ where $B$ is a bounded operator, and defined as follows there exists
$M > 0$ and $n \geq 1,$ such that for 
$\zeta_{n}$ gigen by \eqref{eq:Def-zeta-n}  
$$Bf := M\zeta_{n} f \quad \mbox{and}\quad 
f\in D(L).$$
And the linear operator $A$ is such that there
exists a real number  $\tau$, such that $A-\tau$ is $m$-dissipative and
$$Af := Lf - Bf,\quad \mbox{for all} \quad 
f\in D(L).$$

\medskip
The Proposition \ref{thm:adj}  applied to $L$ 
leads to the following:

\medskip
\noindent{\bf (2)}
There exists $b_{0}\in \R,$ such that for some
$b>b_{0},$ we have  a function $\psi>0,$
$\psi \in D(L^{\ast}),$ such that 
$$L^{\ast}\psi\geq b\psi.$$ 

\medskip

An application of Lemma \ref{thm:Kat} to the operator $L$ allows us
to deduce that $L$ satisfies Kato's inequality.
This means that:

\medskip

\noindent{\bf (3)} For all $f\in D(L)$ we have 
$L|f|\geq {\rm sgn}(f)Lf.$ Therefore the 
semigroup generated by 
$(L, D(L))$ is positivity preserving semigroup.

\medskip
Using Lemma \ref{thm:Str-MP}  one can assert that the linear
operator $L$ satisfies the strong  maximum principle. i.e

\medskip

\noindent{\bf (4)} If $(f\not \equiv 0,\; f\geq 0
\;\mbox{and}\; 
Lf\leq 0),$ then $f>0$ in $\R^{d}.$ 

\medskip
Then applying Theorem \ref{thm:SM-JC}
we conclude that , there exists 
$G>0,$ such that $$LG=0.$$  
This completes the proof of the Theorem \ref{thm:GS}.
\end{proof} 
%%%%%%%%%%%%%%%%%%%%%%%%%%%%%%%%

%%%%%%%%%%%%%%%%%%%%%%%%%%%%%%%%
\section{Exponential stability}
In this section we want to prove 
Theorem \ref{thm:Cv-GS}. And this proof will be 
based on the 
decomposition of the operator $L = A + B$, with a regular bounded operator $B$ and 
a linear operator $A$, such that  $A-\tau I$ 
is $m$-dissipative.
Before starting the proof of 
Theorem \ref{thm:Cv-GS},
we state the following results which will be 
useful for the sequel.
\begin{prop}
\label{thm:Contract}
Let $f_{0}\in D(L)$ be the initial data of
\eqref{eq:1.1}. Assume that hypotheses (\textbf{H0})---(\textbf{H3}) hold for $p=2.$
For $M > 0$ and $n \geq 1,$ let
$\zeta_{n}$ be gigen by \eqref{eq:Def-zeta-n}, set 
$$Bf := M\zeta_{n} f \quad \mbox{and}\quad\mbox{for all} \quad f\in D(L).$$ 
\begin{equation}
\label{eq:4.1}
Af := Lf - Bf,\quad \mbox{for all} \quad 
f\in D(L)
\end{equation}
Then we may fix $n\geq 1$ and $M > 0$ large enough so that there exists $\omega_0>0$ satisfying the following property: for all $f_{0} \in L^2_{k}$ we have
\begin{equation}
\label{eq:4.2}
\|S_{A}(t)f_{0}\|_{L^{2}_{k}}\leq 
e^{-\omega_0 t}\|f_{0}\|_{L^{2}_{k}}.
\end{equation}
\end{prop}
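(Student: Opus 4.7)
The plan is to obtain the decay directly from the dissipativity inequality $(-Af|f)_{k} \geq \omega_{0}\|f\|_{L^{2}_{k}}^{2}$ together with Grönwall's lemma. Since $A = L - B$ with $B$ bounded and $L$ already known to generate a $C_{0}$-semigroup on $L^{2}_{k}$ by Proposition \ref{thm:Exist-SG}, the perturbed operator $A$ generates a semigroup $S_{A}(t)$ on $L^{2}_{k}$, so the only real content is the coercivity estimate.

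First I would compute $(-Af|f)_{k}$ for $f\in D(L) = D(A)$ using the identity \eqref{eq:2.2} of Proposition \ref{thm:Exist-SG}: since $-A = -L + B$ and $(Bf|f)_{k} = M\intg \zeta_{n}\, f^{2}\,\<x\>^{k}\,dx$, one has
\begin{equation}\label{eq:PropA-1}
(-Af|f)_{k} = \intg |\nabla f|^{2}\,\<x\>^{k}\,dx + \intg f^{2}\left[\frac{1}{2}\frac{\Psi_{k}(x)}{\<x\>^{k}} + M\zeta_{n}(x)\right]\<x\>^{k}\,dx,
\end{equation}
where $\Psi_{k}/\<x\>^{k}$ is the quantity appearing in \eqref{eq:1.5}. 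Thus the goal reduces to showing that one can choose $n \geq 1$ and $M > 0$ such that the bracket in \eqref{eq:PropA-1} is bounded below by some constant $\omega_{0} > 0$ uniformly in $x\in \R^{d}$.

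Next I would split $\R^{d}$ according to $|x|$ and exploit hypothesis (\textbf{H3}): fix $n \geq R$ so that $\zeta_{n}(x) = 1$ on $\{|x|\leq R\}$. For $|x| > R$, inequality \eqref{eq:1.5} (with $p=2$) yields $\Psi_{k}(x)/\<x\>^{k} \geq \omega^{\star}$, hence since $\zeta_{n}\geq 0$ the bracket in \eqref{eq:PropA-1} is at least $\omega^{\star}/2$ there. For $|x|\leq R$, the assumption (\textbf{H0}) gives $\mathrm{div}(\mathbf{E})\in L^{\infty}_{\rm loc}$ and $\mathbf{E}\in L^{\infty}_{\rm loc}$, so $\Psi_{k}/\<x\>^{k}$ is bounded below on $\overline{B}(0,R)$ by some constant $-C_{R}$, and since $\zeta_{n}(x) = 1$ on that ball the bracket is at least $-C_{R}/2 + M$. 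Choosing
\[
M \geq \frac{C_{R}}{2} + \frac{\omega^{\star}}{2}, \qquad \omega_{0} := \frac{\omega^{\star}}{2},
\]
we obtain, dropping the nonnegative $|\nabla f|^{2}$ term,
\begin{equation}\label{eq:PropA-2}
(-Af|f)_{k} \geq \omega_{0}\,\|f\|_{L^{2}_{k}}^{2}\qquad \text{for all } f \in D(A).
\end{equation}

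Finally, for $f_{0}\in D(A)$ the function $f(t) := S_{A}(t)f_{0}$ belongs to $C^{1}([0,\infty);L^{2}_{k})\cap C([0,\infty);D(A))$ and satisfies $\partial_{t} f = Af$; taking the scalar product with $f$ in $L^{2}_{k}$ and invoking \eqref{eq:PropA-2} gives
\[
\frac{1}{2}\frac{d}{dt}\|f(t)\|_{L^{2}_{k}}^{2} = (Af(t)|f(t))_{k} \leq -\omega_{0}\|f(t)\|_{L^{2}_{k}}^{2},
\]
whence by Grönwall $\|S_{A}(t)f_{0}\|_{L^{2}_{k}} \leq e^{-\omega_{0}t}\|f_{0}\|_{L^{2}_{k}}$; a standard density argument based on $D(A) = D(L)$ being dense in $L^{2}_{k}$ then extends the estimate to all $f_{0}\in L^{2}_{k}$, giving \eqref{eq:4.2}. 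The only delicate step is the uniform lower bound on $\Psi_{k}/\<x\>^{k}$ in the compact region $|x|\leq R$; this is where (\textbf{H0}) is essential and where the freedom in choosing $M$ large is used to absorb the (possibly negative) contribution coming from $\mathrm{div}(\mathbf{E})$ and the sub-quadratic drift term near the origin.
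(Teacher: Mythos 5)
Your proposal is correct and follows essentially the same route as the paper: an energy identity for $t\mapsto\|S_A(t)f_0\|_{L^2_k}^2$ combined with the computation behind \eqref{eq:2.2} and Gr\"onwall's lemma. In fact your treatment of the coercivity is more careful than the paper's: the paper discards the term $M\!\int f^2\zeta_n\langle x\rangle^k\,dx$ as merely nonnegative and then invokes (\textbf{H3}) on all of $\mathbb{R}^d$, even though \eqref{eq:1.5} is only assumed for $|x|>R$ --- so, as written, the paper's argument never actually uses the perturbation $B$ and leaves the region $|x|\le R$ unjustified. Your splitting into $\{|x|\le R\}$ and $\{|x|>R\}$, with $n\ge R$ and $M$ chosen large to absorb the locally bounded negative part of $\Psi_k/\langle x\rangle^k$, is precisely the missing step and is the reason the statement requires $n$ and $M$ large.
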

\medskip

\begin{proof}
Let $f_{0}\in D(L).$ We consider the following 
equation 
\begin{equation}
\label{eq:4.3}
 \Big \{ \begin{array}{l}
 \partial_{t} f - Af=0 
\\
f(0,x)=f_{0}(x)
 \end{array}  .
\end{equation}
There exists $\tau\in \R$ 
such that  $A-\tau I$  is 
$m$-dissipative, according to 
the definition of the 
linear operator $A$. Then there exists 
$f\in C^{1}
([0,T];L^{2}_{k})$ solution of 
\eqref{eq:4.3}. To simplify notations, 
let us set 
$f(t,x):=S_{A}(t)f_{0}(x).$ 
As the function $f\in 
C^{1}([0,T];L^{2}_{k})$ it makes 
sense to write 
$$\frac{d}{dt}\intg |f(t, x)|^{2}
\<x\>^{k}dx
=\intg (Af(t, x))f(t,x)
\<x\>^{k}dx$$
Using \eqref{eq:4.1}, we have
$$\intg (Af) f\<x\>^{k}dx=
\intg Lf f\<x\>^{k}dx-M\intg f^{2} \zeta_{n}
\<x\>^{k}dx.$$
Simple computations give us the following 
formula
\begin{align*}
\frac{d}{dt}\intg f^{2}\<x\>^{k}dx&=
-2\intg |\gd f|^{2}\<x\>^{k}dx +
\intg f^{2}\Delta\<x\>^{k}dx\\
&+
\intg f^{2}{\rm div}(\E)\<x\>^{k}dx
-\intg f^{2}\E\cdot \gd\<x\>^{k}
dx\\
&- M\intg f^{2}\zeta_{n}\<x\>^{k}dx.
\end{align*}
Using the fact that
$\intg |\gd f|^{2}\<x\>^{k}dx$ and
$M\intg f^{2}\zeta_{n}\<x\>^{k}dx$ are 
positive, one can deduce that
\begin{equation*}
\frac{d}{dt}\intg f^{2}\<x\>^{k}dx \leq 
\intg f^{2}\Delta\<x\>^{k}+ 
\intg f^{2}{\rm div}(\E)\<x\>^{k}dx - \intg f^{2}\E\cdot 
\gd\<x\>^{k}dx.
\end{equation*}
Using the expressions \eqref{eq:nabla-x} and \eqref{eq:laplacian-x}, we can write
\begin{align*}
\frac{d}{dt}\intg f^{2}\<x\>^{k}dx &\leq
\intg f(x)^{2}\frac{kd+k(k+d-2)|x|^{2}}{
(1+|x|^{2})^{2}}\<x\>^{k}dx\\
&+
\intg f^{2}{\rm div}(\E)\<x\>^{k}dx
-\intg f(x)^{2}
\frac{kx\cdot\E}{1+|x|^{2}}\<x\>^{k}dx.
\end{align*}
Then hypothesis (\textbf{H3}) allows us 
to write that 
$$\frac{d}{dt}\intg f^{2}\<x\>^{k}dx\leq
-2\omega^{\star}\intg f^{2}\<x\>^{k}dx.$$
Integrating in time between $0$ and $t,$ or
using Gronwall's lemma, we find that
$$\intg f^{2}\<x\>^{k}dx\leq \|f_{0}\|^{2}
_{L^{2}_{k}}e^{-2\omega^{\star}t}.$$ Thus
$$\|S_{A}(t)f_{0}\|^{2}_{L^{2}_{k}}\leq 
\|f_{0}\|_{L^{2}_{k}}e^{-2\omega^{\star}t}.$$
This completes the proof of the proposition.
\end{proof}

\medskip

For the reader's convenience we give the proof wich follows S. Mischler-J. Scher \cite{JM}.
\begin{proof}(of Theorem \ref{thm:Cv-GS}).
By iterating the formula \eqref{eq:1.2}, one has
\begin{equation}
\label{sdp}
S_{L}(t)=S_{A}(t)+ (S_{A}\ast BS_{A})(t) +
(S_{L}\ast BS_{A}\ast BS_{A})(t).
\end{equation}
Since $S_{L}(t)$ is a $\mathcal{C}_{0}$-semi group, 
there exists $\omega_{1}\in \R$ and $C_{0}\geq 1,$
such that $\|S_{L}(t)\|\leq C_{1}e^{\omega_{1} t}.$
Now we choose $a_{1}$ a real number, such that $a_{1} >\omega_{1}.$ 
Thus using the inverse Laplace transform formula we obtain the following representation: 
$$(S_{L}\ast BS_{A}\ast BS_{A})(t):=
\frac{1}{2i\pi}\int^{a_{1}+i\infty}_{
a_{1}-i\infty}e^{zt}R_{L}(z)(BR_{A}(z))^{2}dz.$$
Since $0$ is a simple eigenvalue of $L$ one can 
define the projection operator $\Pi$ on the space generated by $G,$ wich is the eigenfunction associated to $0.$ The projection operator $\Pi$ is defined as follows: for all $f\in D(L)$ 
$$\Pi f=M(f)G,\quad \mbox{where}\quad  M(f):=\intg f(x) dx.$$
Using Proposition \ref{thm:Contract} we can easily establish that 
\begin{equation}
\label{eq:sdp1}
\|(S_{A}\ast BS_{A})(t)\|\leq C^{2}\|B\|\;t\;e^{-\omega^{\star}t}.
\end{equation}
It has been shown in \cite[Theorem 2.1]{JM} that if 
there exist two linear operators $A$ and $B$ such that
$L=A+B,$ where $A$ and $B$ are given as  in Proposition \ref{thm:Contract} and such that \eqref{eq:sdp1}
holds, then a spectral gap exists, that is we 
can find $a^{\star}>0$ such that the spectrum 
$$\Sigma(L)\subset \big \{ z\;\in\; \mathbb{C}\;| \; Re (z)<-a^{\star}\;\big \}\cup\big \{ 0\big \}.$$
Now we choose $a$ such that $0< a < a^{\star}$ and we define 
$$J_{a}(t):=\frac{1}{2i\pi}\int^{-a+i\infty}_{-a-i\infty} e^{zt}R_{L}(z)(BR_{A}(z))^{2}dz.$$

%%then we have spectral gap. According to Theorem \ref{thm:Exist-SG} we know that 0 is the first eigenvalue. Let $a^{\star}$ and $a'$ belong to $(-\infty,0),$  $a^{\star}<a',$ such that
%%$$\Sigma(L)\cap \big \{ z\in \R| z<a'\big \}\subset\big \{ z\in \R| z<a^{\ast}\big \}.$$
Using \eqref{sdp} we have 
\begin{align*}
S_{L}(t)(I-\Pi)&=S_{A}(t)(I-\Pi)+ (S_{A}\ast BS_{A})(t)(I-\Pi)\\
&+\frac{1}{2i\pi}\int^{-a +i\infty}_{-a -i\infty}e^{zt}R_{L}(z)(BR_{A}(z))^{2}(I-\Pi)dz,
\end{align*}
Choose $\omega_{2}\in 
(a, a^{\star}),$ and consider  
$z=-a+is,$ for $s\in \R.$ Since 
$$(zI-A)R_{A}(z)=I,$$ then  
$$R_{A}=\frac{1}{z}(I-AR_{A}(z))=\frac{1}{-a+is}
(I-AR_{A}(-a+is)).$$ Thus
we have
\begin{align*}
\|J_{a}(t)\|&\leq
C\int^{+\infty}_{-\infty}|e^{zt}|\|R_{L}(-a+is)\| \|(BR_{A}(-a+is))\|^{2}ds \\
&\leq
C e^{-a\;t}
\int^{+\infty}_{-\infty}\|R_{L}(-a+is)\| \|BR_{A}(-a+is)\|^{2}ds\\
&\leq
C e^{-a\;t}
\int^{+\infty}_{-\infty}\frac{1}{(a^{2}+s^{2})}\|R_{L}(-a+is)\| \|B(I-AR_{A}(-a+is))\|^{2}ds.
\end{align*}
We know that since the operator $B(I-AR_{A}(-a+is))$ is bounded uniformly in $s$, then there 
exists a constant $C>0$ such that 
$$\forall s\in \R,\qquad  \|B(I-AR_{A}(-a+is))\|\leq C.$$ 
Since $L$ generates a $\mathcal{C}_{0}$-semigoup. One has 
$$\|R_{L}(-a+is)\|\leq \frac{C_{0}}{\omega_{2}-a}.$$  Consequently we obtain
$$\|J_{a}(t)\|\leq C e^{-a\;t}
\frac{C_{0}}{-a+\omega_{2}}
\int^{+\infty}_{-\infty}\frac{1}{(a^{2}+s^{2})}ds,$$ as $t\rightarrow +\infty,$
therefore  we have 
\begin{equation}
\label{eq:ja}
\|J_{a}(t)\|\leq\frac{C_{0}}{\omega_{2}-a}e^{-at}.
\end{equation}

Using the identity 
\eqref{sdp} we have 
\begin{align*}
S_{L}(t)(I-\Pi)f_{0} &= 
S_{A}(t)(I-\Pi)f_{0} + 
(S_{A}\ast BS_{A})(t)(I-\Pi)f_{0}\\
& +
(S_{L}\ast BS_{A}\ast BS_{A})(t)(I-\Pi)f_{0}.
\end{align*}
 Thus we have
 \begin{align*}
\| S_{L}(t)(I-\Pi)f_{0}\|_{L^{2}_{k}} &\leq 
\|S_{A}(t)(I-\Pi)f_{0}\|_{L^{2}_{k}}+
\|(S_{A}\ast BS_{A})(t)(I- \Pi)f_{0}\|_{L^{2}_{k}}\\
&+\|(S_{L}\ast BS_{A}\ast BS_{A})(t)(I-\Pi)f_{0}\|_{L^{2}_{k}}.
\end{align*}
Since we know that 
$$\|(S_{L}\ast BS_{A}\ast BS_{A})(t)(I-\Pi)f_{0}\|_{L^{2}_{k}}\leq \|J_{a}(t)\|\cdot 
\|(I-\Pi)f_{0}\|_{L^{2}_{k}}$$ and  that thanks to
\eqref{eq:ja} we have 
$$\|(S_{L}\ast BS_{A}\ast BS_{A})(t)(I-\Pi)f_{0}\|_{L^{2}_{k}}\leq \frac{C_{0}}{\omega_{2}-a}e^{-at}
\cdot 
\|(I-\Pi)f_{0}\|_{L^{2}_{k}}.$$
On the other hand we have prove that
$$\|(S_{A}\ast BS_{A})(t)(I-\Pi)f_{0}\|_{L^{2}_{k}}
\leq C\|B\|\;t\;e^{-\omega^{\star}t}
\|(I-\Pi)f_{0}\|_{L^{2}_{k}}.$$ 
We may use Proposition~\ref{thm:Contract} to obtain that 
$$\|S_{A}(t)(I-\Pi)f_{0}\|_{L^{2}_{k}}\leq
Ce^{-\omega^{\star}t}\|(I-\Pi)f_{0}\|_{L^{2}_{k}}.$$
Now we choose $0<\omega < \min(w^{\star},a)$ and then  
we have 
$$\|S_{L}(t)(I-\Pi)f_{0}\|_{L^{2}_{k}}\leq  
C(\omega,\;t)e^{-\omega\;t}\|(I-\Pi)f_{0}\|_{L^{2}_{k}}.$$
This completes the proof of Theorem \ref{thm:Cv-GS}.
\end{proof}
%%%%%%%%%%%%%%%%%%%%%%%%%%%%%%%%%%%%%%%%%%%%%%%%%%%

%%%%%%%%%%%%%%%%%%%%%%%%%%%%%%%%%%%%%%%%%%%%%%%%%%%

\end{document}